\documentclass[reqno,12pt]{amsart}
\usepackage{latexsym,amssymb,amsmath,epsfig}
\usepackage{amsthm}
\usepackage{enumerate}
\usepackage{mathrsfs}
\usepackage{graphicx}
\usepackage{pstricks-add}
\usepackage[latin1]{inputenc} 
\oddsidemargin 0.1cm
\evensidemargin 0.1cm
\setlength{\textwidth}{155mm} 
\setlength{\textheight}{220mm}

\newtheorem{theorem}{Theorem}[section]
\newtheorem{lemma}{Lemma}[section]
\newtheorem{corollary}{Corollary}[section]

\newtheorem{remark}{Remark}[section]

\newtheorem{example}{Example}[section]


\begin{document}
\title{(Di)graph decompositions and magic type labelings: a dual relation}
\author{S. C. L\'opez}
\address{%
Departament de Matem\`{a}tiques\\
Universitat Polit\`{e}cnica de Catalunya.\\
C/Esteve Terrades 5\\
08860 Castelldefels, Spain}
\email{susana.clara.lopez@upc.edu}

\author{F. A. Muntaner-Batle}
\address{Graph Theory and Applications Research Group \\
 School of Electrical Engineering and Computer Science\\
Faculty of Engineering and Built Environment\\
The University of Newcastle\\
NSW 2308
Australia}
\email{famb1es@yahoo.es}

\author{M. Prabu}
\address{British University Vietnam\\
Hanoi, Vietnam}
\email{mprabu201@gmail.com}

\maketitle

\begin{abstract}
A graph $G$ is called edge-magic if there is a bijective function $f$ from the set of vertices and edges to the set $\{1,2,\ldots,|V(G)|+|E(G)|\}$ such that the sum $f(x)+f(xy)+f(y)$ for any $xy$ in $E(G)$ is constant. Such a function is called an edge-magic labeling of G and the constant is called the valence of $f$. An edge-magic labeling with the extra property that $f(V(G))= \{1,2,\ldots,|V(G)|\}$ is called super edge-magic. In this paper, we establish a relationship between the valences of (super) edge-magic labelings of certain types of bipartite graphs and the existence of a particular type of decompositions of such graphs.
\end{abstract}

\begin{quotation}
\noindent{\bf Key Words}: {Edge-magic, super edge-magic, magic sum, $\otimes_h$-product, decompositions}

\noindent{\bf 2010 Mathematics Subject Classification}:  Primary 05C78,
   Se\-con\-dary       05C76
\end{quotation}

\section{Introduction}
For the terminology and notation not introduced in this paper we refer the reader to either one of the following sources \cite{BaMi,CharLes,G,SlMb,Wa}.
By a $(p,q)$-graph we mean a graph of order $p$ and size $q$.
Let $m\leq n$ be integers, to denote the set $\{m,m+1,\ldots,n\}$ we use $[m,n]$. Kotzig and Rosa  introduced in \cite{KotRos70} the concepts of edge-magic graphs and edge-magic labelings as follows: Let $G$ be a $(p,q)$-graph. Then $G$ is called {\it edge-magic} if there is a bijective function $f:V(G)\cup E(G)\rightarrow [1,p+q]$ such that the sum $f(x)+f(xy)+f(y)=k$ for any $xy\in E(G)$. Such a function is called an {\it edge-magic labeling} of $G$ and $k$ is called the {\it valence} \cite{KotRos70} or the {\it magic sum} \cite{Wa} of the labeling $f$. We write $\hbox{val}(f)$ to denote the valence of $f$.

Inspirated by the notion of edge-magic labelings, Enomoto et al. introduced in \cite{E} the concepts of super edge-magic graphs and super edge-magic labelings as follows: Let $f:V(G)\cup E(G) \rightarrow [1,p+q]$ be an edge-magic labeling of a $(p,q)$-graph G with the extra property that $f(V(G))=[1,p]$. Then G is called { \it super edge-magic} and $f$ is a {\it super edge-magic labeling} of $G$. Notice that although the definitions of (super) edge-magic graphs and labelings were originally provided for simple graphs (that is,  graphs with no loops nor multiple edges), along this paper, we understand these definitions for any graph. Therefore, unless otherwise specified, the graphs considered in this paper are not necessarily simple. Figueroa-Centeno et al. provided in \cite{F2}, the following useful characterization of super edge-magic simple graphs, that works in exactly the same way for non necessarily simple graphs.

\begin{lemma}\label{super_consecutive} \cite{F2}
Let $G$ be a $(p,q)$-graph. Then $G$ is super edge-magic if and only if  there is a
bijective function $g:V(G)\longrightarrow [1,p]$ such
that the set $S=\{g(u)+g(v):uv\in E(G)\}$ is a set of $q$
consecutive integers.
In this case, $g$ can be extended to a super edge-magic labeling $f$ with valence $p+q+\min S$.
\end{lemma}
Unless otherwise specified, whenever we refer to a function as a super edge-magic labeling we will assume that it is a function $f$ as in  Lemma \ref{super_consecutive}. Before moving on, it is worthwhile mentioning that Acharya and Hegde had already defined in 1991 \cite{AH} the concept of strongly indexable graphs. This concept turns out to be equivalent to the concept of super edge-magic graphs. However in this paper we will use the names super edge-magic graphs and super edge-magic labelings. In \cite{F1} Figueroa et al., introduced the concept of super edge-magic digraph as follows: a digraph $D=(V,E)$ is super edge-magic if its underlying graph is super edge-magic. In general, we say that a digraph D admits a labeling $f$ if its underlying graph admits the labeling $f$.
It was also in \cite{F1} that the following product was introduced: let $D$ be a digraph and let $\Gamma$ be a family of digraphs with the same set $V$ of vertices. Assume that $h: E(D) \to \Gamma$ is any function that assigns elements of $\Gamma$ to the arcs of $D$. Then the digraph $D \otimes _{h} \Gamma $ is defined by (i) $V(D \otimes _{h} \Gamma)= V(D) \times V$ and (ii) $((a,i),(b,j)) \in E(D \otimes _{h} \Gamma) \Leftrightarrow (a,b) \in E(D)$ and $(i,j) \in E(h(a,b))$. Note that when $h$ is constant, $D \otimes _{h} \Gamma$ is the Kronecker product.
Many relations among labelings have been established using the $\otimes_h$-product and some particular families of graphs, namely $\mathcal{S}_p$ and $\mathcal{S}_p^k$ (see for instance, \cite{ILMR,LopMunRiu1,LopMunRiu6,LopMunRiu7}).
The family $\mathcal{S}_p$ contains all super edge-magic $1$-regular labeled digraphs of order $p$ where each vertex takes the name of the label that has been assigned to it. A super edge-magic digraph $F$ is in $\mathcal{S}_p^k$ if $|V(F)|= |E(F)|=p$ and the minimum sum of the labels of adjacent vertices is equal to $k$ (see Lemma \ref{super_consecutive}). Notice that, since each $1$-regular digraph has minimum induced sum equal to $(p+3)/2$, $ \mathcal{S}_p \subset \mathcal{S}_p^{(p+3)/2}$. The following result was introduced in \cite{LopMunRiu6}, generalizing a previous result found in \cite{F1} :

\begin{theorem} \label{spk} \cite{LopMunRiu6}
Let $D$ be a (super) edge-magic digraph and let $h: E(D) \to \mathcal{S}_p^k$ be any function. Then und($D\otimes _{h} \mathcal{S}_p^k$) is (super) edge-magic.
\end{theorem}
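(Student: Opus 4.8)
The plan is to construct, from an edge-magic labeling of $D$ together with the labelings carried by the members of $\mathcal{S}_p^k$, a single labeling $F$ of $\mathrm{und}(D\otimes_h\mathcal{S}_p^k)$ which is edge-magic, and which is moreover super edge-magic when the chosen labeling of $D$ is super edge-magic. Set $m=|V(D)|$ and $n=|E(D)|$, and fix an edge-magic labeling $f\colon V(D)\cup E(D)\to[1,m+n]$ of $D$ with $\mathrm{val}(f)=k'$; if $D$ is super edge-magic, take $f$ so that $f(V(D))=[1,m]$. For every arc $e=(a,b)\in E(D)$, the digraph $h(e)\in\mathcal{S}_p^k$ is a super edge-magic $(p,p)$-digraph whose vertices carry the labels $[1,p]$ and whose least adjacent-vertex sum equals $k$; by Lemma~\ref{super_consecutive} this vertex labeling extends to a super edge-magic labeling $g_e\colon V(h(e))\cup E(h(e))\to[1,2p]$ with $g_e(i)=i$ for $i\in[1,p]$, with $g_e(E(h(e)))=[p+1,2p]$, and with $i+g_e((i,j))+j=2p+k$ for every arc $(i,j)$ of $h(e)$.

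Recall that $V(D\otimes_h\mathcal{S}_p^k)=V(D)\times[1,p]$ and that $((a,i),(b,j))$ is an arc of $D\otimes_h\mathcal{S}_p^k$ exactly when $(a,b)\in E(D)$ and $(i,j)\in E(h(a,b))$. Define $F$ by
\begin{align*}
F(a,i)&=p\bigl(f(a)-1\bigr)+i,\\
F\bigl((a,i),(b,j)\bigr)&=p\bigl(f((a,b))-1\bigr)+g_{(a,b)}((i,j))-p.
\end{align*}
The intervals $B_u=[\,p(u-1)+1,\ pu\,]$, $u\in[1,m+n]$, partition $[1,p(m+n)]$. For a fixed vertex $a$, the $p$ values $F(a,i)$ with $i\in[1,p]$ fill exactly the block $B_{f(a)}$; for a fixed arc $(a,b)$, the $p$ values $F((a,i),(b,j))$ with $(i,j)\in E(h(a,b))$ fill exactly the block $B_{f((a,b))}$, since $g_{(a,b)}$ maps $E(h(a,b))$ bijectively onto $[p+1,2p]$. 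Because $f$ is a bijection from $V(D)\cup E(D)$ onto $[1,m+n]$, the blocks used for the vertices and for the arcs of the product are pairwise distinct and together exhaust $[1,p(m+n)]$; hence $F$ is a bijection onto $[1,p(m+n)]$. If in addition $f(V(D))=[1,m]$, the vertices of $D\otimes_h\mathcal{S}_p^k$ receive precisely the labels $\bigcup_{u=1}^{m}B_u=[1,mp]$, which is the additional requirement in the super edge-magic case.

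It remains to verify the magic sum. For an arbitrary arc $((a,i),(b,j))$ of $D\otimes_h\mathcal{S}_p^k$ one computes
\begin{align*}
F(a,i)+F\bigl((a,i),(b,j)\bigr)+F(b,j)
&=p\bigl(f(a)+f((a,b))+f(b)\bigr)-4p+\bigl(i+g_{(a,b)}((i,j))+j\bigr)\\
&=pk'-4p+(2p+k)=p(k'-2)+k,
\end{align*}
which does not depend on the arc. Thus $F$ is an edge-magic labeling of $\mathrm{und}(D\otimes_h\mathcal{S}_p^k)$, with valence $p\,\mathrm{val}(f)-2p+k$, and it is super edge-magic precisely when $f$ is. I do not expect a serious obstacle here: the substance of the proof is absorbed into the two applications of Lemma~\ref{super_consecutive} and the base-$p$ bookkeeping. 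The point that genuinely needs care is that one must invoke the \emph{specific} extension furnished by Lemma~\ref{super_consecutive} for each $h(e)$ --- the one whose edge labels occupy $[p+1,2p]$ and whose valence is the uniform value $2p+k$ --- since it is exactly this uniformity that makes $F$ simultaneously bijective and magic; phrasing everything in terms of edges of the underlying graphs then makes the passage through $\mathrm{und}(\cdot)$ transparent.
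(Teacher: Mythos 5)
Your construction is exactly the paper's (see Remark \ref{remarkspk}): after substituting $g_{(a,b)}((i,j))=2p+k-(i+j)$ from Lemma \ref{super_consecutive}, your arc label $p(f((a,b))-1)+g_{(a,b)}((i,j))-p$ becomes the paper's $p(e-1)+(k+p)-(i+j)$, and your valence $p\,\mathrm{val}(f)-2p+k$ agrees with the formula $p(\mathrm{val}(f)-3)+k+p$ of Lemma \ref{valenceinducedproduct}. The argument is correct and takes essentially the same route, with the block-partition bookkeeping made explicit.
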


\begin{remark}\label{remarkspk}
The key point in the proof of Theorem \ref{spk} is to rename the vertices of $D$ and each element of $\mathcal{S}_p^k$ after the labels of their corresponding (super) edge-magic labeling $f$ and their super edge-magic labelings respectively. Then the labels of the product are defined as follows: (i) the vertex $(a,i) \in V(D\otimes _{h} \mathcal{S}_p^k)$ receives the label: $p(a-1)+i$ and (ii) the arc $((a,i),(b,j)) \in E(D\otimes _{h} \mathcal{S}_p^k)$ receives the label: $p(e-1)+(k+p)-(i+j)$, where $e$ is the label of $(a,b)$ in D. Thus, for each arc $((a,i),(b,j)) \in E(D\otimes _{h} \mathcal{S}_p^k)$, coming from an arc $ e = (a,b) \in E(D)$ and an arc $ (i,j) \in E(h(a,b))$, the sum of labels is constant and equal to $p(a+b+e-3)+(k+p)$. That is, $p( \hbox{val}(f)-3)+k+p$. Thus, the next result is obtained.
\end{remark}

\begin{lemma}\label{valenceinducedproduct} \cite{LopMunRiu6}
Let $\widehat{f}$ be the (super) edge-magic labeling of the graph $D \otimes_h\mathcal{S}_p^k$ induced by a (super) edge-magic labeling $f$ of $D$ (see Remark \ref{remarkspk}). Then the valence of $\widehat{f}$ is given by the formula
\begin{eqnarray}
\hbox{val}(\widehat{f}) &=& p(\hbox{val}(f)-3) + k + p.
\end{eqnarray}
\end{lemma}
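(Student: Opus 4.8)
The plan is to unwind the explicit labeling recorded in Remark \ref{remarkspk} and track the arithmetic; essentially no idea beyond that remark is needed. First I would fix notation by renaming the vertices of $D$ with the labels assigned by the (super) edge-magic labeling $f$, so that a typical vertex of $D$ is an integer $a$ and a typical arc is a pair $(a,b)$ carrying the label $e=f(a,b)$, with $a+e+b=\hbox{val}(f)$. Likewise I would rename the vertices of each $h(a,b)\in\mathcal{S}_p^k$ via a super edge-magic labeling, so that an arc of $h(a,b)$ is a pair $(i,j)$ with $i,j\in[1,p]$, and, by the definition of $\mathcal{S}_p^k$ together with Lemma \ref{super_consecutive}, the $p$ values $i+j$ obtained in this way form the block $[k,k+p-1]$. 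With these identifications, $\widehat{f}$ is the labeling of $D\otimes_h\mathcal{S}_p^k$ that assigns $p(a-1)+i$ to the vertex $(a,i)$ and $p(e-1)+(k+p)-(i+j)$ to the arc $((a,i),(b,j))$.

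Next I would record that $\widehat{f}$ is a genuine (super) edge-magic labeling, which is what makes $\hbox{val}(\widehat{f})$ meaningful; this is the only point requiring any care, and it is already implicit in Theorem \ref{spk}. For a fixed arc $e=(a,b)$ of $D$ the quantity $(k+p)-(i+j)$ runs bijectively over $[1,p]$, so the arc labels coming from $e$ fill the interval $[p(e-1)+1,pe]$; since $f(V(D))$ and $f(E(D))$ partition $[1,|V(D)|+|E(D)|]$, the vertex labels and the arc labels of $\widehat{f}$ together exhaust $[1,p(|V(D)|+|E(D)|)]$, and when $f$ is super, i.e. $f(V(D))=[1,|V(D)|]$, the vertex labels occupy exactly $[1,p|V(D)|]$.

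Finally comes the computation: for an arbitrary arc $((a,i),(b,j))$ of the product, the sum of the labels of its two endpoints and of the arc itself is
\[
\bigl(p(a-1)+i\bigr)+\bigl(p(e-1)+(k+p)-(i+j)\bigr)+\bigl(p(b-1)+j\bigr)=p(a+b+e-3)+(k+p),
\]
which does not depend on the chosen arc. Substituting $a+e+b=\hbox{val}(f)$ yields $\hbox{val}(\widehat{f})=p(\hbox{val}(f)-3)+k+p$, as required. The only ``obstacle'' is thus the bookkeeping in the previous paragraph; once the block structure of the sums $i+j$ is invoked through Lemma \ref{super_consecutive}, the valence formula is a one-line substitution.
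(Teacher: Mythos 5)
Your proof is correct and follows essentially the same route as the paper, which establishes this lemma via the explicit labeling recorded in Remark \ref{remarkspk}: assign $p(a-1)+i$ to vertices and $p(e-1)+(k+p)-(i+j)$ to arcs, verify bijectivity using the consecutive block $[k,k+p-1]$ of sums guaranteed by Lemma \ref{super_consecutive}, and compute the constant sum $p(a+b+e-3)+(k+p)=p(\hbox{val}(f)-3)+k+p$. Your additional bookkeeping on how the arc labels fill the intervals $[p(e-1)+1,pe]$ only makes explicit what the paper leaves implicit.
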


All the results in the literature involving the $\otimes_h$-product had super edge-magic labeled digraphs in the second factor of the product. However, in \cite{LMP2} it was shown that other labeled (di)graphs can be used in order to enlarge the results obtained, showing that the $\otimes_h$-product is a very powerful tool.  Next, we introduce the family $\mathcal{T}^q_\sigma$ of edge-magic labeled digraphs. An edge-magic labeled digraph $F$ is in $\mathcal{T}^q_\sigma$ if $V(F)=V$, $|E(F)|=q$ and the magic sum of the edge-magic labeling is equal to $\sigma$.

\begin{theorem}\cite{LMP2}
\label{producte_super_k}
Let $D\in \mathcal{S}_n^k$ and let $h$ be any function $h:E(D)\rightarrow \mathcal{T}^q_\sigma$. Then $D\otimes_h \mathcal{T}^q_\sigma$ admits an edge-magic labeling with valence  $(p+q)(k+n-3)+\sigma$, where $p=|V|, \ |E(F)|=q$ and $F \in \mathcal{T}^q_\sigma$.
\end{theorem}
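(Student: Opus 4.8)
The plan is to dualize the construction encoded in Remark~\ref{remarkspk}: there the \emph{inner} family is super edge-magic and one blows up the labels of the (merely edge-magic) outer digraph by the inner order; here, by contrast, the \emph{outer} digraph $D$ is super edge-magic, so it is its labels (and a companion set of ``edge coordinates'') that tile $[1,n]$, while each inner member of $\mathcal T^q_\sigma$ contributes an ordinary edge-magic labeling whose vertex labels and edge labels partition $[1,p+q]$. Concretely, write $g\colon V(D)\to[1,n]$ for the super edge-magic labeling of $D$; since $D\in\mathcal S_n^k$ has exactly $n$ arcs, Lemma~\ref{super_consecutive} says the map $E(D)\to[k,k+n-1]$ given by $(a,b)\mapsto g(a)+g(b)$ is a bijection, so $\theta(a,b):=k+n-g(a)-g(b)$ is a bijection of $E(D)$ onto $[1,n]$ (in particular $D$ has no pair of arcs with equal endpoint sum). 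Each $F\in\mathcal T^q_\sigma$ shares the labeled vertex set $V$, each vertex carrying its own label as in $\mathcal S_p$, so ``the label of $i\in V$'' is unambiguous; the edge-magic labeling of $F$ then sends $E(F)$ bijectively onto $[1,p+q]\setminus V$, say $e\mapsto\ell_F(e)$, and the magic condition on an arc $(i,j)$ of $F$ reads $i+\ell_F(i,j)+j=\sigma$.

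Next I would write down the induced labeling $\widehat g$ of $D\otimes_h\mathcal T^q_\sigma$, which is a $(np,nq)$-graph. For a vertex put
\[
\widehat g\big((a,i)\big)=(p+q)\big(g(a)-1\big)+i ,
\]
and for an arc coming from $(a,b)\in E(D)$ and $(i,j)\in E(h(a,b))$ put
\[
\widehat g\big((a,i),(b,j)\big)=(p+q)\big(k+n-g(a)-g(b)-1\big)+\ell_{h(a,b)}(i,j).
\]
The magic property is then a one-line computation: along such an arc the three labels sum to
\[
(p+q)\big[(g(a)-1)+(k+n-g(a)-g(b)-1)+(g(b)-1)\big]+\big[i+\ell_{h(a,b)}(i,j)+j\big]=(p+q)(k+n-3)+\sigma ,
\]
using the edge-magic condition on $h(a,b)$ and the definition of $\theta$; this is exactly the claimed valence.

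The substantive step is to check that $\widehat g$ is a bijection from the vertices and arcs of the product onto $[1,n(p+q)]$. I would argue blockwise, thinking of $[1,n(p+q)]$ as the disjoint union of blocks $B_c=[(p+q)(c-1)+1,(p+q)c]$ for $c\in[1,n]$. As $a$ ranges over $V(D)$, $g(a)$ hits each $c\in[1,n]$ exactly once, and the vertices $(a,i)$ with $g(a)=c$ occupy precisely the positions of $V$ inside $B_c$; dually, as $(a,b)$ ranges over $E(D)$, $\theta(a,b)$ hits each $c\in[1,n]$ exactly once, and the $q$ product arcs over a fixed $(a,b)$ occupy precisely the positions of $[1,p+q]\setminus V$ inside $B_{\theta(a,b)}$, because $\ell_{h(a,b)}$ runs over all of $[1,p+q]\setminus V$. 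Since $V$ and its complement partition $[1,p+q]$, every block is filled exactly once by vertex-labels on $V$ and once by arc-labels on the complement, so $\widehat g$ is a bijection onto $[1,n(p+q)]$, as required; hence $\widehat g$ is an edge-magic labeling of the underlying graph of $D\otimes_h\mathcal T^q_\sigma$.

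I expect the only delicate point to be precisely this bijectivity bookkeeping, namely making sure the two ``coordinates'' of each label never interfere. The first coordinate must place $D$-vertices and $D$-arcs into a common range $[1,n]$, which is exactly where super edge-magicity of $D$ is used: it is the consecutiveness of the endpoint sums that makes $\theta$ a bijection onto $[1,n]$ and that pins down the shift $k+n$ (any other shift would push $\theta$ outside $[1,n]$ and destroy the bijection). The second coordinate must place inner vertices and inner arcs into complementary parts of $[1,p+q]$, which is where we use that an edge-magic labeling of an element of $\mathcal T^q_\sigma$ splits $[1,p+q]$ into its vertex labels (the fixed set $V$, common to the whole family) and its $q$ edge labels. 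Once the two displayed formulas are in hand, everything else is routine arithmetic, and the theorem follows.
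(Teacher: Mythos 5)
Your proposal is correct and is essentially the proof sketched in the paper's Remark following the theorem: the same vertex label $(p+q)(g(a)-1)+i$, the same arc label (your $\ell_{h(a,b)}(i,j)$ equals the paper's $\sigma-(i+j)$), and the same block-by-block bijectivity argument using $\{k+n-g(a)-g(b)\}=[1,n]$ and $\{\sigma-(i+j)\}=[1,p+q]\setminus V$. The only difference is notational (the order of the coordinates in the product), so nothing further is needed.
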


\begin{remark}
Let $p=|V|$. The keypoint in the proof of Theorem \ref{producte_super_k} is to identify the vertices of $D$ and each element of $\mathcal{T}^q_\sigma$ after the labels of their corresponding super edge-magic labeling and edge-magic labeling, respectively. Then the labels of $D\otimes_h \mathcal{T}^q_\sigma$ are defined as follows: (i) if $(i,a)\in V(D\otimes_h \mathcal{T}^q_\sigma)$ we assign to the vertex the label: $(p+q)(i-1)+a$ and (ii) if $((i,a),(j,b))\in E (D\otimes_h \mathcal{T}^q_\sigma)$ we assign to the arc the label: $(p+q)(k+n-(i+j)-1)+(\sigma-(a+b)).$ Notice that, since $D\in \mathcal{S}_n^k$ is labeled with a super edge-magic labeling with minimum sum of the adjacent vertices equal to  $k$, we have $\{(k+n)-(i+j): \ (i,j)\in E(D )\}=[1, n].$ Moreover, since each element $F\in \mathcal{T}^q_\sigma$, it follows that $\{(\sigma-(a+b): \ (a,b)\in E(F )\}=[1, p+q]\setminus V .$ Thus, the set of labels in $D\otimes_h \mathcal{T}^q_\sigma$ covers all elements in $[1, n(p+q)]$. Moreover, for each arc $((i,a)(j,b))\in E (D\otimes_h \mathcal{T}^q_\sigma)$ the sum of the labels is constant and is equal to: $(p+q)(k+n-3)+\sigma.$
\end{remark}

In \cite{LopMunRiu5} L\'opez et al. introduced the following definitions.
Let $G=(V,E)$ be a $(p,q)$-graph. Then the set $S_{G}$ is defined as $S_{G}= \{ 1/q( \Sigma_{u \in V}  deg(u)g(u)+ \Sigma_{i=p+1}^{p+q} i ):$ the function $g:V \rightarrow [1,p]$ is bijective\}. If $\lceil\min S_G\rceil\le  \lfloor\max S_G\rfloor$ then the {\it super edge-magic interval} of $G$, denoted by $I_G$, is defined to be the set $I_G=\left[\lceil\min S_G\rceil, \lfloor\max S_G\rfloor\right]$
and the {\it super edge-magic set} of $G$, denoted by $\sigma_G$, is the set formed by all integers $k\in I_G$ such that $k$ is the valence of some super edge-magic labeling of $G$. A graph $G$ is called {\it perfect super edge-magic} if $\sigma_G=I_G$.
In order to conduct our study in this paper, the following lemma will be of great help.
\begin{lemma}\cite{LMP2} \label{k1nsem}
The graph formed by a star $K_{1,n}$ and a loop attached to its central vertex, denoted by $K_{1,n}^{l}$, is perfect super edge-magic for all positive integers $n$. Furthermore, $|I_{K_{1,n}^{l}}|=|\sigma_{K_{1,n}^{l}}|=n+1$.
\end{lemma}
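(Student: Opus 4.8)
The plan is to pin down the super edge-magic interval $I_{K_{1,n}^{l}}$ by a direct computation of $S_{K_{1,n}^{l}}$, and then to exhibit, for each integer in that interval, an explicit super edge-magic labeling realizing it as a valence. Throughout write $c$ for the central vertex and $v_1,\dots,v_n$ for the leaves, so that $p=|V(K_{1,n}^{l})|=n+1$ and $q=|E(K_{1,n}^{l})|=n+1$ (the $n$ star edges together with the loop). Observe that the loop contributes $2$ to the degree of $c$, so $\deg(c)=n+2$ and $\deg(v_i)=1$ for every $i$.

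First I would compute $S_{K_{1,n}^{l}}$. For a bijection $g:V\to[1,n+1]$, using $\sum_{u\in V}g(u)=\binom{n+2}{2}$ one gets
\[
\sum_{u\in V}\deg(u)g(u)=(n+2)g(c)+\Bigl(\tfrac{(n+1)(n+2)}{2}-g(c)\Bigr)=(n+1)g(c)+\tfrac{(n+1)(n+2)}{2},
\]
while $\sum_{i=p+1}^{p+q}i=\sum_{i=n+2}^{2n+2}i=\tfrac{(n+1)(3n+4)}{2}$. Dividing the sum of these two quantities by $q=n+1$ yields the clean formula $g(c)+2n+3$. Hence $S_{K_{1,n}^{l}}=\{\,g(c)+2n+3:\ g(c)\in[1,n+1]\,\}=[2n+4,\,3n+4]$, all of whose elements are integers, so $I_{K_{1,n}^{l}}=[2n+4,\,3n+4]$ and $|I_{K_{1,n}^{l}}|=n+1$. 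Also, by the definition of $\sigma_G$ one always has $\sigma_{K_{1,n}^{l}}\subseteq I_{K_{1,n}^{l}}$, so it remains only to prove the reverse inclusion.

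For the reverse inclusion I would use Lemma \ref{super_consecutive}. Fix $t\in[1,n+1]$, set $g(c)=t$ and let $g$ assign the remaining labels $[1,n+1]\setminus\{t\}$ to the leaves in any order. The star edges then produce the sums $t+s$ with $s\in[1,n+1]\setminus\{t\}$, that is, the set $\{t+1,t+2,\dots,t+n+1\}\setminus\{2t\}$, and the loop produces the sum $2t$. Since $1\le t\le n+1$ forces $t+1\le 2t\le t+n+1$, the value $2t$ lies in the missing slot, so $S=\{t+1,t+2,\dots,t+n+1\}$ is a set of $q=n+1$ consecutive integers with $\min S=t+1$. By Lemma \ref{super_consecutive}, $g$ extends to a super edge-magic labeling of valence $p+q+\min S=2(n+1)+(t+1)=2n+3+t$. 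Letting $t$ run over $[1,n+1]$ gives valences exactly $2n+4,2n+5,\dots,3n+4$, i.e. all of $I_{K_{1,n}^{l}}$. Therefore $\sigma_{K_{1,n}^{l}}=I_{K_{1,n}^{l}}$, $K_{1,n}^{l}$ is perfect super edge-magic, and $|I_{K_{1,n}^{l}}|=|\sigma_{K_{1,n}^{l}}|=n+1$.

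The only real subtlety — hardly an obstacle — is the bookkeeping around the loop: one must remember that the loop edge label is an extra element of $[1,p+q]$, that the loop sum is $2g(c)$, and that this sum precisely refills the "hole" left in the otherwise consecutive block of star-edge sums; the fact that the hole position $2g(c)$ always falls inside that block is what makes the construction work for every admissible choice of the central label. Care with the loop's contribution of $2$ to $\deg(c)$ is likewise essential in the computation of $S_{K_{1,n}^{l}}$.
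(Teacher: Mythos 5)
Your proof is correct: the computation of $S_{K_{1,n}^{l}}$ and the labelings with $g(c)=t$ yielding consecutive sums $[t+1,t+n+1]$ and valence $2n+3+t$ are exactly right. The paper itself only cites this lemma from \cite{LMP2} without reproducing a proof, but your construction coincides with the labelings $f_r$ the paper uses throughout (central vertex labeled $r$, minimum induced sum $r+1$, valence $2n+3+r$), so this is essentially the intended argument.
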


In \cite{PEM_LMR} the same authors generalized the previous definitions to edge-magic graphs and labelings as follows:
Let $G=(V,E)$ be a $(p,q)$-graph, and denote by $T_G$ the set $$\left\{\frac{\sum_{u\in V}\mbox{deg}(u)g(u)+\sum_{e\in E}g(e)}q:\ g:V\cup E \rightarrow [1,p+q] \ \mbox{ is a bijective function}\right\}.$$
If $\lceil\min T_G\rceil\le  \lfloor\max T_G\rfloor$ then the {\it magic interval} of $G$, denoted by $J_G$, is defined to be the set
$J_G=\left[\lceil\min T_G\rceil, \lfloor\max T_G\rfloor\right]$ and the {\it magic set} of $G$, denoted by $\tau_G$, is the set
$\tau_G=\{n\in J_G:\ n \ \mbox{is the valence of some edge-magic labeling of}\ G\}.$ It is clear that $\tau_G\subseteq J_G$. A graph $G$ is called {\it perfect edge-magic } if $\tau_G=J_G$.  In the next lemma, we provide a well known result that gives a lower bound and an upper bound for edge-magic valences. We add the proof as a matter of completeness. Recall that the complementary labeling of an edge-magic labeling $f$ is the labeling $\overline{f}(x)=p+q+1-f(x)$, for all  $x \in V(G) \cup E(G)$, and that val$(\overline{f})=3(p+q+1)-\hbox{val}(f)$.

\begin{lemma}\label{maxminvalence}
Let $G$ be a $(p,q)$-graph with an edge-magic labeling $f$. Then $p+q+3 \leq \hbox{val}(f) \leq 2(p+q).$
\end{lemma}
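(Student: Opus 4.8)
The plan is to exploit the hint given just before the statement: the complementary labeling $\overline{f}$ is again an edge-magic labeling, and its valence satisfies $\mathrm{val}(\overline{f}) = 3(p+q+1) - \mathrm{val}(f)$. This symmetry means it suffices to prove only one of the two bounds — say the lower bound $\mathrm{val}(f) \ge p+q+3$ — and the upper bound $\mathrm{val}(f) \le 2(p+q)$ then follows by applying the lower bound to $\overline{f}$ and solving $3(p+q+1) - \mathrm{val}(f) \ge p+q+3$.

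For the lower bound, the first step is to sum the magic condition $f(x)+f(xy)+f(y) = \mathrm{val}(f)$ over all edges $xy \in E(G)$, obtaining
\begin{equation}
q\cdot\mathrm{val}(f) = \sum_{u\in V}\deg(u)f(u) + \sum_{e\in E}f(e).
\end{equation}
Next I would bound the right-hand side from below. The edge labels $f(e)$, being $q$ distinct values from $[1,p+q]$, sum to at least $1+2+\cdots+q = \binom{q+1}{2}$; more to the point, one cheap bound is $\sum_{e\in E} f(e) \ge 1+2+\cdots+q$ but a cleaner route uses that each $f(e)\ge 1$ contributes, together with the vertex part. For the vertex term, since $\sum_{u\in V}\deg(u) = 2q$ and the $f(u)$ are $p$ distinct positive integers, the sum $\sum_{u\in V}\deg(u)f(u)$ is minimized (over all assignments of distinct positive labels, ignoring the upper constraint) by pairing large degrees with small labels; a crude but sufficient bound is $\sum_{u\in V}\deg(u)f(u) \ge 2q$ since every $f(u)\ge 1$, but to land exactly on $p+q+3$ one needs to be slightly more careful and also use that one vertex label is at least... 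The efficient argument: pick an edge $xy$; then $f(x)+f(y)+f(xy)$ involves three distinct labels from $[1,p+q]$, and since the smallest three distinct values available are $1,2,3$, we get $\mathrm{val}(f) = f(x)+f(xy)+f(y) \ge 1+2+3 = 6 $ — but this is too weak unless $p+q$ is small, so instead I observe that among the three labels on any edge, they are distinct elements of $[1,p+q]$, and I want $\ge p+q+3$.

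The sharp step, and the one I expect to be the main obstacle to state crisply, is the following counting observation: fix any edge $xy$ with labels $a = f(x)$, $b=f(y)$, $c = f(xy)$, all distinct in $[1,p+q]$. The value $p+q$ must be the label of \emph{something} — some vertex or edge $z$. If $z \in \{x,y,xy\}$ then one of $a,b,c$ equals $p+q$ and the other two are at least $1,2$, so $\mathrm{val}(f) \ge p+q+3$. If $z\notin\{x,y,xy\}$, then $z$ is incident to or is an edge meeting some edge $x'y'$ on which $p+q$ appears together with two other distinct labels $\ge 1,2$, giving again $\mathrm{val}(f) \ge p+q+3$; since the valence is constant this is the bound for \emph{every} edge. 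Thus $\mathrm{val}(f)\ge p+q+3$. Applying this to $\overline f$ and using $\mathrm{val}(\overline f) = 3(p+q+1)-\mathrm{val}(f)$ yields $3(p+q+1)-\mathrm{val}(f)\ge p+q+3$, i.e. $\mathrm{val}(f)\le 2(p+q)$, completing the proof. The only real care needed is the incidence argument showing the maximum label $p+q$ always sits on some edge-triple — which is immediate because $G$ has at least one edge and every label is used, so whichever edge carries the label $p+q$ (directly as an edge label, or as one of its two endpoints) furnishes the estimate.
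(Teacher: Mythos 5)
Your final argument is essentially the paper's own proof: the maximum label $p+q$ sits in some edge-triple whose other two labels are distinct and hence at least $1$ and $2$, giving $\mathrm{val}(f)\ge p+q+3$, and the upper bound follows by applying this to the complementary labeling $\overline{f}$ with $\mathrm{val}(\overline{f})=3(p+q+1)-\mathrm{val}(f)$. The false starts (summing over all edges, the $1+2+3$ bound) are unnecessary detours, but the argument you settle on is correct and matches the paper; both versions tacitly assume the element labeled $p+q$ actually lies in some edge-triple, i.e.\ that it is not an isolated vertex.
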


\begin{proof}
Let $f:V(G) \cup E(G) \rightarrow [1,p+q]$ be an edge-magic labeling of $G$. The two lowest possible integers in $[1,p+q-1]$ that can be added to $p+q$ are $1$ and $2$. Thus, $\hbox{val}(f) \geq p+q+3.$ By using the complementary labeling, the maximum possible valence has the form $3(p+q+1)-\hbox{val}(g)$ where $\hbox{val}(g)$ is the minimum possible valence. Thus, $\hbox{val}(f) \leq 3(p+q+1)-\hbox{val}(g)\leq 2(p+q).$
\end{proof}

The study of the (super) edge-magic properties of the graph $C_m\odot \overline{K}_n$ as a particular subfamily of $S_n^k$ has been of interest recently. See for instance \cite{LMP1,LopMunRiu5,PEM_LMR}. Due to this, many things are known on the (super) edge-magic properties of the graphs $C_{p^k} \otimes \overline{K}_n$ \cite{PEM_LMR} and $C_{pq} \otimes \overline{K}_n$ \cite{LMP1}, where $p$ and $q$ are coprime. However, many other things remain a mystery, and we believe that it is worth the while to work in this direction. In fact, a big hole in the literature, appears when considering graphs of the form $C_m\odot \overline{K}_n$ for $m$ even. In this paper, we will devote Section \ref{section_morevalences} to this type of graphs. This study leads us to consider other classes of graphs and to study the relation existing between the valences of edge-magic and super edge-magic labelings and the well known problem of graph decompositions.

A decomposition of a simple graph $G$ is a collection $\{H_i: i \in [1,m] \}$ of subgraphs of $G$ such that $\cup_{i \in [1,m]}E(H_i)$ is a partition of the edge set of $G$. If the set $\{H_i: i \in [1,m] \}$ is a decomposition of $G$, then we denote it by $G \cong H_1 \oplus H_2 \oplus \dots \oplus H_m = \oplus_{i=1}^{m}H_i$.

We want to bring this introduction to its end by saying that the interested reader can also find excellent sources of information about the topic of graph labeling in \cite{BaMi,G,MhMi,SlMb,Wa}.

\section{More valences}
\label{section_morevalences}
As we have already mentioned in the introduction, not too much is known about the valences of (super) edge-magic labelings for the graph $C_m\odot \overline{K}_n$ when m is even. In fact,  as far as we know, the only papers that deal with (super) edge-magic labelings of $C_m\odot \overline{K}_n$ for $m$ even are \cite{FIM02,LMP1}. Hence almost all such results involve only odd cycles. Next, we study the edge-magic valences of $C_m\odot \overline{K}_n$ when $m$ is even. Unless otherwise specified, $\overrightarrow{G}$ denotes any orientation of $G$.
The next lemma is an generalization of Lemma 12 in \cite{LMP1}.
\begin{lemma}\label{repeatedvalences}
Let $g$ be a (super) edge-magic labeling of a graph $G$, and let $f_r$ be the super edge-magic labeling of $K_{1,n}^l$ that assigns label $r$ to the central vertex, $1 \leq r \leq n+1$. Then, 
\begin{itemize}
 \item[(i)] the induced (super) edge-magic labeling $\widehat{g}_r$ of $\overrightarrow{G} \otimes \overrightarrow{K}_{1,n}^l$ has valence $(n+1)(\hbox{val}(g)-2)+r+1$. \\
  \item[(ii)] Let $g'$ be a different (super) edge-magic labeling of $G$ with $\hbox{val}(g) < \hbox{val}(g')$, then $\hbox{val}(\widehat{g}_{n+1}) < \hbox{val}(\widehat{g}_{1}')$, where $\widehat{g}_r'$ is the induced (super) edge-magic labeling of $\overrightarrow{G} \otimes \overrightarrow{K}_{1,n}^l$ when $K_{1,n}^l$ is labeled with $f_r$ and $G$ with $g'.$
  \end{itemize}
\end{lemma}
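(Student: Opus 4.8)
The plan is to realize $\overrightarrow{K}_{1,n}^l$, equipped with $f_r$, as a member of the family $\mathcal{S}_{n+1}^{r+1}$, and then to invoke Theorem~\ref{spk} and Lemma~\ref{valenceinducedproduct} directly; part (ii) is then a one-line arithmetic consequence of the formula proved in part (i).

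First I would record the parameters of $K_{1,n}^l$: it is an $(n+1,n+1)$-graph (one centre, $n$ leaves, hence $n$ star edges plus one loop), so in the notation of $\mathcal{S}_p^k$ we have $p=n+1$. Next I would check that $f_r$ is genuinely super edge-magic with minimum adjacent-vertex sum equal to $r+1$. If the centre carries the label $r$, then the leaves carry the labels of $[1,n+1]\setminus\{r\}$, so the star edges produce the vertex-sums $\{r+j:\ j\in[1,n+1]\setminus\{r\}\}=[r+1,r+n+1]\setminus\{2r\}$, while the loop produces the sum $2r$. Since $1\le r\le n+1$ forces $r+1\le 2r\le r+n+1$, the union of these two sets is exactly the block of $n+1$ consecutive integers $[r+1,r+n+1]$; by Lemma~\ref{super_consecutive} this means $f_r$ is a super edge-magic labeling with $\min S=r+1$, i.e.\ $\overrightarrow{K}_{1,n}^l$ (with $f_r$) lies in $\mathcal{S}_{n+1}^{r+1}$. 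The one place that needs a moment of care is precisely this computation: verifying that the loop-sum $2r$ falls into the single gap left by the star-edge sums, so that $S$ is an unbroken interval of length $n+1$. Everything else is bookkeeping.

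For (i), I would take $D=\overrightarrow{G}$ with its (super) edge-magic labeling $g$, let $h$ be the constant function sending every arc of $D$ to $\overrightarrow{K}_{1,n}^l$ labeled by $f_r$, and apply Theorem~\ref{spk} with $p=n+1$ and $k=r+1$: the underlying graph of $\overrightarrow{G}\otimes\overrightarrow{K}_{1,n}^l$ is (super) edge-magic, and it is super edge-magic exactly when $g$ is. The valence of the induced labeling $\widehat{g}_r$ is then read off from Lemma~\ref{valenceinducedproduct}:
\[
\hbox{val}(\widehat{g}_r)=(n+1)\bigl(\hbox{val}(g)-3\bigr)+(r+1)+(n+1)=(n+1)\bigl(\hbox{val}(g)-2\bigr)+r+1,
\]
which is the asserted formula.

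Finally, for (ii), substituting $r=n+1$ (with $g$) and $r=1$ (with $g'$) into the formula of (i) gives $\hbox{val}(\widehat{g}_{n+1})=(n+1)(\hbox{val}(g)-2)+n+2$ and $\hbox{val}(\widehat{g}_1')=(n+1)(\hbox{val}(g')-2)+2$, so that $\hbox{val}(\widehat{g}_{n+1})<\hbox{val}(\widehat{g}_1')$ is equivalent to $n<(n+1)\bigl(\hbox{val}(g')-\hbox{val}(g)\bigr)$. Since $\hbox{val}(g)<\hbox{val}(g')$ are integers we have $\hbox{val}(g')-\hbox{val}(g)\ge 1$, hence the right-hand side is at least $n+1>n$, and the proof is complete. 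In short, there is no hard step: the whole content is identifying the correct value of $k$ attached to $f_r$, after which both parts fall out of the machinery recalled in the introduction.
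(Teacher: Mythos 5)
Your proposal is correct and follows essentially the same route as the paper: identify $\overrightarrow{K}_{1,n}^l$ with labeling $f_r$ as a member of $\mathcal{S}_{n+1}^{r+1}$, read off the valence from Lemma~\ref{valenceinducedproduct}, and deduce (ii) by the integrality of the valences. Your explicit verification that the loop-sum $2r$ fills the gap in the star-edge sums, so that the induced sums form the interval $[r+1,r+n+1]$, is a detail the paper merely asserts, and is a welcome addition.
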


\begin{proof}
The labeling $f_r$ of $\overrightarrow{K}_{1,n}^l$ has minimum induced sum $r+1.$ Thus, $\overrightarrow{K}_{1,n}^l \in \mathcal{S}_{n+1}^{r+1}.$ By Lemma \ref{valenceinducedproduct}, $\hbox{val}(\widehat{g}_{r})=(n+1)[\hbox{val}(g)-3]+r+1+n+1$, that is,
$\hbox{val}(\widehat{g}_{r})=(n+1)[\hbox{val}(g)-2]+r+1$. Let $g'$ be a different (super) edge-magic labeling of $G$ with $\hbox{val}(g) < \hbox{val}(g')$, then  $\hbox{val}(\widehat{g}_{n+1})= (n+1)[\hbox{val}(g)-2]+n+2 \leq (n+1)[\hbox{val}(g')-1-2]+n+2$. That is, $\hbox{val}(\widehat{g}_{n+1}) \leq (n+1) [\hbox{val}(g')-2]+1 <  \hbox{val}(\widehat{g}_{1}').$ Hence the result follows.
\end{proof}

\begin{theorem}\label{lowerboundvalences_1}
Let $G$ be an edge-magic $(p,q)$-graph. Then $ |\tau_{\overrightarrow{G} \otimes \overrightarrow{K}_{1,n}^l}| \geq (n+1)|\tau_{\overrightarrow{G}}|+2$. 
\end{theorem}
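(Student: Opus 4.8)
The plan is to show that the $\otimes$-product with the family of stars-with-a-loop produces, from each edge-magic valence of $G$, a whole block of $n+1$ distinct edge-magic valences of $\overrightarrow{G}\otimes\overrightarrow{K}_{1,n}^l$, and that these blocks, as we range over different valences of $G$, are pairwise disjoint; adding the two extra valences will come from the endpoints of the magic interval. First I would fix, for each valence $v\in\tau_{\overrightarrow{G}}$, an edge-magic labeling $g$ of $G$ with $\mathrm{val}(g)=v$. By Lemma~\ref{k1nsem}, $K_{1,n}^l$ is perfect super edge-magic with $n+1$ distinct super edge-magic valences, realized (by Lemma~\ref{repeatedvalences}) by the labelings $f_r$, $1\le r\le n+1$, which place each possible label on the central vertex. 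Lemma~\ref{repeatedvalences}(i) then gives, for this fixed $g$, the $n+1$ induced valences
\[
\mathrm{val}(\widehat{g}_r)=(n+1)(\mathrm{val}(g)-2)+r+1,\qquad 1\le r\le n+1,
\]
which are $n+1$ consecutive integers and hence pairwise distinct.

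Next I would handle the disjointness across different valences of $G$. Given two valences $v<v'$ in $\tau_{\overrightarrow{G}}$, realized by $g$ and $g'$, Lemma~\ref{repeatedvalences}(ii) states precisely that $\mathrm{val}(\widehat{g}_{n+1})<\mathrm{val}(\widehat{g}_1')$; since within each block the valences increase with $r$, this shows the entire block coming from $g$ lies strictly below the entire block coming from $g'$. Ordering the $|\tau_{\overrightarrow{G}}|$ valences of $G$ and concatenating the corresponding blocks therefore yields $(n+1)|\tau_{\overrightarrow{G}}|$ distinct edge-magic valences of $\overrightarrow{G}\otimes\overrightarrow{K}_{1,n}^l$, all of them lying in $\tau_{\overrightarrow{G}\otimes\overrightarrow{K}_{1,n}^l}$ because Theorem~\ref{spk} guarantees each $\widehat{g}_r$ is a genuine (super) edge-magic labeling.

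It remains to exhibit two further valences not already counted. The natural candidates are the extreme admissible values: the minimal valence $p'+q'+3$ and the maximal valence $2(p'+q')$ of $\overrightarrow{G}\otimes\overrightarrow{K}_{1,n}^l$ from Lemma~\ref{maxminvalence}, where $p'=p(n+1)$ and $q'=q+qn$ (the loop and the $n$ star-edges contribute $n+1$ edges per arc of $G$, so $q'=q(n+1)$, and $p'=p(n+1)$). One shows the block valences obtained above all lie strictly between these two bounds—using $\mathrm{val}(g)\ge p+q+3$ and $\mathrm{val}(g)\le 2(p+q)$ together with the formula $(n+1)(\mathrm{val}(g)-2)+r+1$—so that realizing $p'+q'+3$ and $2(p'+q')$ by some edge-magic labelings of $\overrightarrow{G}\otimes\overrightarrow{K}_{1,n}^l$ adds exactly $2$ to the count. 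The main obstacle is this last step: one must actually produce (or cite) edge-magic labelings of $\overrightarrow{G}\otimes\overrightarrow{K}_{1,n}^l$ attaining the extreme valences, and verify they are not among the $\widehat{g}_r$; I expect this to be done either via a direct construction exploiting the loop at each central vertex (which gives a lot of freedom in placing small and large labels) or via the complementary-labeling trick to pass from the minimum to the maximum once one of them is secured. The arithmetic comparisons guaranteeing strict separation are routine given Lemmas~\ref{repeatedvalences} and~\ref{maxminvalence}.
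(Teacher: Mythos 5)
Your first part---producing $(n+1)|\tau_{\overrightarrow{G}}|$ distinct valences from the labelings $\widehat{g}_r$ and separating the blocks via Lemma~\ref{repeatedvalences}(ii)---is exactly what the paper does and is fine. The problem is the ``$+2$''. You propose to realize the two theoretical extremes $p'+q'+3$ and $2(p'+q')$ of Lemma~\ref{maxminvalence} on the product graph, and you yourself flag that you cannot produce labelings attaining them. This is not a fixable loose end but a wrong target: Lemma~\ref{maxminvalence} only gives necessary bounds on valences, and there is no reason a general product $\overrightarrow{G}\otimes\overrightarrow{K}_{1,n}^l$ admits edge-magic labelings with those extreme valences (most graphs do not attain them). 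So as written the argument only establishes the weaker bound $(n+1)|\tau_{\overrightarrow{G}}|$.

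The paper gets the two extra valences by a different device: commutativity of the Kronecker product. Since $\mathrm{und}(\overrightarrow{K}_{1,n}^l\otimes\overrightarrow{G})\cong \mathrm{und}(\overrightarrow{G}\otimes\overrightarrow{K}_{1,n}^l)$, one may apply Theorem~\ref{producte_super_k} to the \emph{reversed} product, with $\overrightarrow{K}_{1,n}^l\in\mathcal{S}_{n+1}^{r+1}$ as first factor and the edge-magic labeled $\overrightarrow{G}\in\mathcal{T}^q_{\mathrm{val}(g)}$ as second factor. This yields labelings $\widetilde{g}_r$ with $\mathrm{val}(\widetilde{g}_r)=(p+q)(n+r-1)+\mathrm{val}(g)$. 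A short computation using only $p+q+3\le \mathrm{val}(g)\le 2(p+q)$ from Lemma~\ref{maxminvalence} shows $\mathrm{val}(\widetilde{g}_1)<\mathrm{val}(\widehat{g}_1)$ and $\mathrm{val}(\widehat{g}_{n+1})<\mathrm{val}(\widetilde{g}_{n+1})$; taking $g$ of minimum (resp.\ maximum) valence then places one new valence strictly below, and one strictly above, the entire family already constructed. That is the missing idea in your proposal: the extra valences come from swapping the order of the factors and invoking Theorem~\ref{producte_super_k}, not from the endpoints of the magic interval.
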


\begin{proof}
Let $f_r$ be the super edge-magic labeling of $K_{1,n}^l$ that assigns the label $r$ to the central vertex, $1\leq r \leq n+1$. Let $g:V(G) \cup E(G) \rightarrow [1,p+q]$ be an edge-magic labeling of  $G$. By Lemma \ref{repeatedvalences}, $\hbox{val}(\widehat{g}_r)=(n+1)[\hbox{val}(g)-2]+r+1$ and if $\hbox{val}(g) < \hbox{val}(g')$, then $\hbox{val}(\widehat{g}_{n+1}) < \hbox{val}(\widehat{g}_{1}')$ where $\widehat{g}_r$ is the induced edge-magic labeling of $\overrightarrow{G} \otimes \overrightarrow{K}_{1,n}^l$. Therefore, $ |\tau_{\overrightarrow{G} \otimes \overrightarrow{K}_{1,n}^l}| \geq (n+1)|\tau_{\overrightarrow{G}}|.$

Consider $\overrightarrow{K}_{1,n}^l \otimes \overrightarrow{G}.$ By Theorem \ref{producte_super_k}, $\hbox{val}(\tilde{g}_r)=(p+q)[n+r-1]+\hbox{val}(g), 1\leq r \leq n+1$ where $\tilde{g}_r$ is the induced labeling of $\overrightarrow{K}_{1,n}^l \otimes \overrightarrow{G}$ when $\overrightarrow{K}_{1,n}^l$ is labeled with $f_r$ and $\overrightarrow{G}$ with $g'.$
We claim that $\hbox{val}(\tilde{g}_1) < \hbox{val}(\hat{g}_1)$ and $\hbox{val}(\hat{g}_{n+1}) < \hbox{val}(\tilde{g}_{n+1})$. Assume to the contrary that $\hbox{val}(\tilde{g}_1) \geq \hbox{val}(\hat{g}_1)$, we get $\hbox{val}(g) \leq p+q+2$ which is a contradiction to Lemma \ref{maxminvalence}. Similarly, if $\hbox{val}(\hat{g}_{n+1}) \geq \hbox{val}(\tilde{g}_{n+1})$, we get $\hbox{val}(g) \geq 2(p+q)+1$ which again is a contradiction to Lemma \ref{maxminvalence}. Hence $|\tau_{\overrightarrow{G} \otimes \overrightarrow{K}_{1,n}^l}| \geq (n+1)|\tau_{\overrightarrow{G}}|+2.$
\end{proof}

By adding an extra condition on the smallest and the biggest valence, we can improve the lower bound given in the previous result.
\begin{theorem}\label{lowerboundvalences_2}
Let $G$ be an edge-magic $(p,q)$-graph. If $\alpha$ and $\beta$ are the smallest and the biggest valences of $G$, respectively, and $\beta-\alpha<(\alpha-(p+q+2))n$ then $ |\tau_{\overrightarrow{G} \otimes \overrightarrow{K}_{1,n}^l}| \geq (n+3)|\tau_{\overrightarrow{G}}|$.
\end{theorem}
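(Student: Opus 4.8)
The plan is to re-use both halves of the proof of Theorem~\ref{lowerboundvalences_1}, but to harvest \emph{two} valences from the product $\overrightarrow{K}_{1,n}^l\otimes\overrightarrow{G}$ for \emph{each} edge-magic valence of $G$, rather than just two extremal ones overall. First I would record the valences coming from the two orders of the product (which, being Kronecker products, have isomorphic underlying graphs, so all of them count toward $\tau_{\overrightarrow{G}\otimes\overrightarrow{K}_{1,n}^l}$). If $g$ is an edge-magic labeling of $G$ with $\hbox{val}(g)=v$, then by Lemma~\ref{repeatedvalences}(i) the labelings $\widehat g_r$ give $\overrightarrow{G}\otimes\overrightarrow{K}_{1,n}^l$ the block of $n+1$ consecutive valences $B_v=[(n+1)(v-2)+2,\ (n+1)(v-2)+n+2]$, and by Lemma~\ref{repeatedvalences}(ii) the blocks $B_v$, $v\in\tau_{\overrightarrow{G}}$, are pairwise disjoint, so they contribute $(n+1)|\tau_{\overrightarrow{G}}|$ distinct valences, the smallest and largest of which are $L:=(n+1)(\alpha-2)+2$ and $R:=(n+1)(\beta-2)+n+2$. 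Using $f_1$ and $f_{n+1}$ in the other order, Theorem~\ref{producte_super_k} (exactly as invoked in the proof of Theorem~\ref{lowerboundvalences_1}) yields, for each $v\in\tau_{\overrightarrow{G}}$, the two further valences $a_v:=\hbox{val}(\tilde g_1)=n(p+q)+v$ and $b_v:=\hbox{val}(\tilde g_{n+1})=2n(p+q)+v$.

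The key step is to push all the $a_v$ strictly below $L$ and all the $b_v$ strictly above $R$. Since $v\mapsto a_v$ is increasing it suffices to check $a_\beta<L$, and a routine rearrangement gives
\[
a_\beta<L\ \Longleftrightarrow\ n(p+q)+\beta<(n+1)(\alpha-2)+2\ \Longleftrightarrow\ \beta-\alpha<\bigl(\alpha-(p+q+2)\bigr)n,
\]
which is precisely the hypothesis. For the $b_v$ I would invoke the complementary-labeling symmetry: $\hbox{val}(\overline f)=3(p+q+1)-\hbox{val}(f)$ forces $\tau_{\overrightarrow{G}}$ to be symmetric about $3(p+q+1)/2$, hence $\alpha+\beta=3(p+q+1)$; an identical computation then gives
\[
b_\alpha>R\ \Longleftrightarrow\ \beta-\alpha<\bigl(2(p+q)+1-\beta\bigr)n\ \Longleftrightarrow\ \beta-\alpha<\bigl(\alpha-(p+q+2)\bigr)n,
\]
the last step using $2(p+q)+1-\beta=\alpha-(p+q+2)$. (Equivalently, apply the complementary labeling of $\overrightarrow{G}\otimes\overrightarrow{K}_{1,n}^l$: it maps $B_v$ to $B_{3(p+q+1)-v}$ and $a_v$ to $b_{3(p+q+1)-v}$ while reversing the order, so ``$a_v<L$ for all $v$'' self-dualizes to ``$b_v>R$ for all $v$''.) Thus $a_v\le a_\beta<L$ and $b_v\ge b_\alpha>R$ for every $v\in\tau_{\overrightarrow{G}}$.

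It remains to count. The $(n+1)|\tau_{\overrightarrow{G}}|$ block valences lie in $[L,R]$; the $|\tau_{\overrightarrow{G}}|$ values $a_v$ are pairwise distinct (injectivity in $v$) and lie below $L$; the $|\tau_{\overrightarrow{G}}|$ values $b_v$ are pairwise distinct and lie above $R$; and because $a_v<L\le R<b_{v'}$ the three families are pairwise disjoint. Since every valence of an edge-magic labeling of a graph lies in that graph's magic set, this exhibits $(n+1)|\tau_{\overrightarrow{G}}|+|\tau_{\overrightarrow{G}}|+|\tau_{\overrightarrow{G}}|=(n+3)|\tau_{\overrightarrow{G}}|$ elements of $\tau_{\overrightarrow{G}\otimes\overrightarrow{K}_{1,n}^l}$, as required. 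I expect the main obstacle to be the middle paragraph: one must see that the single stated hypothesis is equivalent to \emph{both} $a_\beta<L$ and $b_\alpha>R$, and the second equivalence only becomes visible after feeding in the symmetry $\alpha+\beta=3(p+q+1)$; the cleaner self-duality route still requires checking how complementation acts on the blocks $B_v$ and on the $f_1,f_{n+1}$-valences.
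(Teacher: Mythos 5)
Your proposal is correct and follows essentially the same route as the paper's proof: the same $(n+1)|\tau_{\overrightarrow{G}}|$ valences from Lemma \ref{repeatedvalences}, the same two extra families $\hbox{val}(\tilde g_1)$ and $\hbox{val}(\tilde g_{n+1})$ from Theorem \ref{producte_super_k} applied to $\overrightarrow{K}_{1,n}^l\otimes\overrightarrow{G}$, and the same verification that the hypothesis (together with the complementary-labeling identity $\beta=3(p+q+1)-\alpha$) places one family strictly below and the other strictly above the block of $(n+1)|\tau_{\overrightarrow{G}}|$ valences. Your computations of $a_\beta<L$ and $b_\alpha>R$ match the paper's two contradiction arguments exactly.
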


\begin{proof}
The previous proof guarantees that, using Lemma \ref{repeatedvalences}, we get $ |\tau_{\overrightarrow{G} \otimes \overrightarrow{K}_{1,n}^l}| \geq (n+1)|\tau_{\overrightarrow{G}}|$. Next we will use Theorem \ref{producte_super_k} to complete the remaining valences. Consider now, the reverse order $\overrightarrow{K}_{1,n}^l \otimes \overrightarrow{G}.$ By Theorem \ref{producte_super_k}, $\hbox{val}(\widetilde{g}_r)=(p+q)[n+r-1]+\hbox{val}(g), 1\leq r \leq n+1$ where $\widetilde{g}_r$ is the induced labeling of $\overrightarrow{K}_{1,n}^l \otimes \overrightarrow{G}$ when $\overrightarrow{K}_{1,n}^l$ is labeled with $f_r$ and $\overrightarrow{G}$ with $g.$
Let $g$ be an edge-magic labeling of $G$ with valence $\alpha$ and $g'$ an edge-magic labeling with valence $\beta$. We claim that $\hbox{val}(\widetilde{g'}_1) < \hbox{val}(\widehat{g}_1)$ and $\hbox{val}(\widehat{g'}_{n+1}) < \hbox{val}(\widetilde{g}_{n+1})$. 

Assume to the contrary that $\hbox{val}(\widetilde{g'}_1) \ge \hbox{val}(\widehat{g}_1)$, then we get $\beta -\alpha\ge (\alpha-(p+q+2))n$ which is a contradiction to the statement. Similarly, if $\hbox{val}(\widehat{g'}_{n+1}) \ge \hbox{val}(\widetilde{g}_{n+1})$, we get $\beta-\alpha\ge (1+2(p+q)-\beta)n$. Notice that, since $\alpha $ and $\beta$ correspond to the valences of two complementary labelings of $G$, $\beta=3(p+q+1)-\alpha$ and this inequality is equivalent to $\beta -\alpha\ge (\alpha-(p+q+2))n$ which is again a contradiction. Since by construction of the induced labeling, if $\hbox{val}(g)< \hbox{val} (g')$, then $\hbox{val}(\tilde g_r)< \hbox{val} (\tilde g'_r)$, we obtain
$\hbox{val}(\tilde g_1)< \ldots < \hbox{val} (\tilde g'_1)<\hbox{val}(\hat g_1)<\ldots <\hbox{val}(\hat g'_{n+1})<\hbox{val}(\tilde g_{n+1})< \ldots < \hbox{val} (\tilde g'_{n+1}).$ Hence $|\tau_{\overrightarrow{G} \otimes \overrightarrow{K}_{1,n}^l}| \geq (n+3)|\tau_{\overrightarrow{G}}|.$
\end{proof}

\begin{figure}{b}
\begin{center}
\includegraphics[scale=0.90]{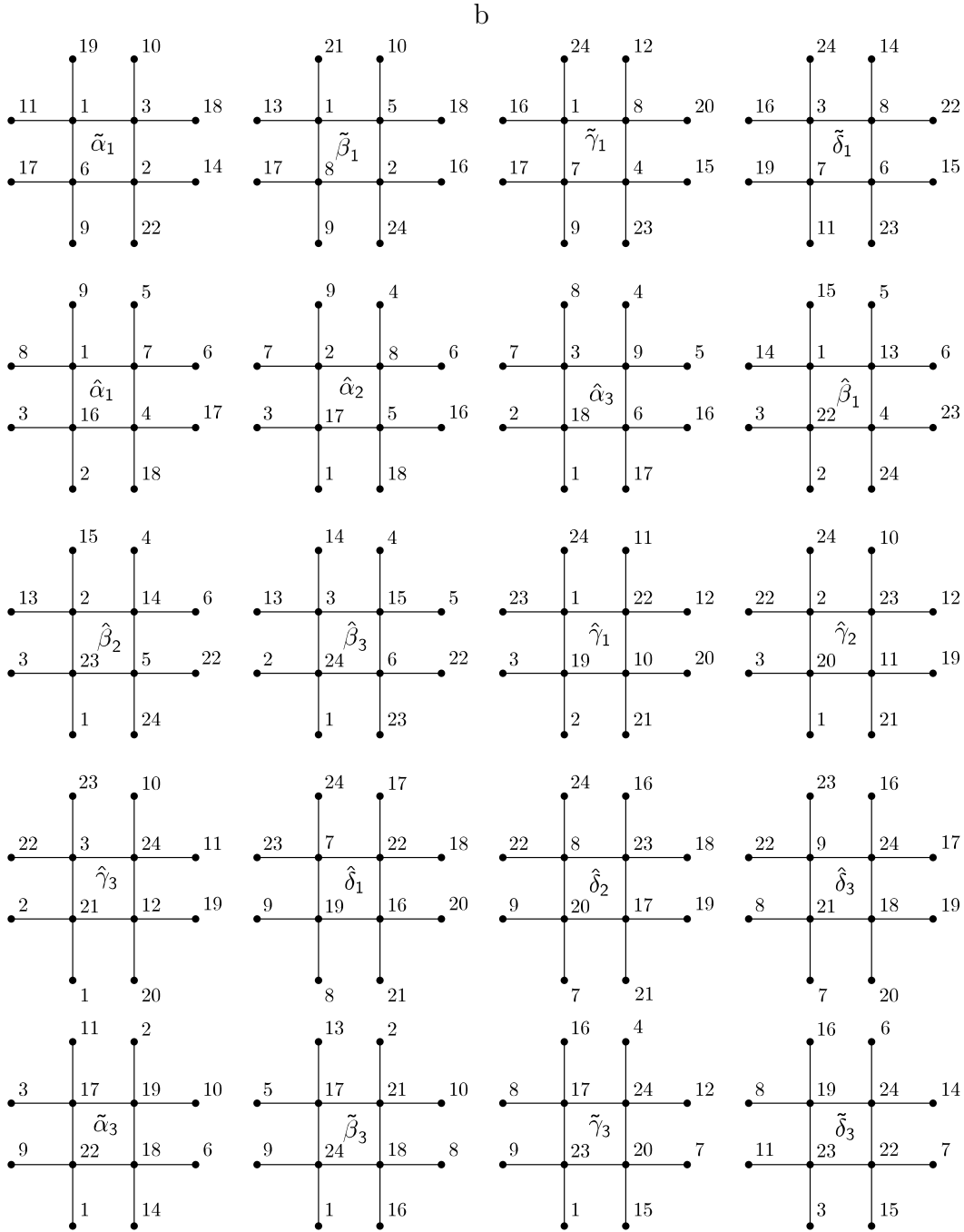}
\caption{All theoretical valences are realizable for $C_4 \odot \overline{K}_2.$}
\label{allvalences of c4coronak2}
\end{center}
\end{figure}

\begin{corollary}\label{bipartite_2_regular}
Let $G$ be any edge-magic (bipartite) 2-regular graph. Then $|\tau_{G\odot\overline{K}_n}| \geq (n+1)|\tau_G|+2$.
\end{corollary}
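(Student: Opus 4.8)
The plan is to realize $C_m\odot\overline{K}_n$ for even $m$ (and more generally any $2$-regular graph $G$) as an $\otimes_h$-product and then invoke Theorem~\ref{lowerboundvalences_1}. First I would observe that an edge-magic $2$-regular $(p,q)$-graph $G$ has $p=q=m$ and is a disjoint union of cycles; fixing any orientation $\overrightarrow{G}$ in which every vertex has in-degree and out-degree $1$, the digraph $\overrightarrow{G}$ is precisely a member of $\mathcal{S}_m^{(m+3)/2}$-type shape except that we only know it is edge-magic, not super edge-magic. The key structural identity I would establish is
\[
\overrightarrow{G}\otimes\overrightarrow{K}_{1,n}^{l}\ \cong\ \overrightarrow{(G\odot\overline{K}_n)},
\]
i.e. blowing up each vertex of $G$ by the star-plus-loop gadget $K_{1,n}^l$ under the $\otimes$-product reproduces the corona $G\odot\overline{K}_n$ (with some orientation). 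This is the heart of the argument: one checks that each vertex $a$ of $\overrightarrow{G}$ is replaced by the $n+1$ vertices $(a,1),\dots,(a,n+1)$, the loop at the central vertex of $K_{1,n}^l$ together with the arc $(a,b)$ of $\overrightarrow{G}$ produces the edge joining the two ``central'' copies — yielding the underlying cycle structure of $G$ — and the $n$ star edges produce the $n$ pendant vertices hanging off each central copy. Thus the underlying graph of the product is exactly $G$ with $n$ pendant vertices attached at every vertex, which is the definition of $G\odot\overline{K}_n$.

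Once this isomorphism is in hand, the corollary is almost immediate: Theorem~\ref{lowerboundvalences_1} applied to the edge-magic $(p,q)$-graph $\overrightarrow{G}$ gives
\[
|\tau_{\overrightarrow{G}\otimes\overrightarrow{K}_{1,n}^{l}}|\ \geq\ (n+1)\,|\tau_{\overrightarrow{G}}|+2,
\]
and the isomorphism above lets us replace the left-hand side by $|\tau_{G\odot\overline{K}_n}|$. For the right-hand side I would note that $\tau_{\overrightarrow{G}}=\tau_G$, since by definition a digraph admits an edge-magic labeling exactly when its underlying graph does, and the magic set depends only on the underlying graph; hence $|\tau_{\overrightarrow{G}}|=|\tau_G|$ and we obtain $|\tau_{G\odot\overline{K}_n}|\geq(n+1)|\tau_G|+2$, as claimed. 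The parenthetical ``bipartite'' in the statement is harmless — every $2$-regular graph all of whose cycles are even is bipartite — and plays no role in the proof; it is presumably included because the subsequent sections specialize to $C_m\odot\overline{K}_n$ with $m$ even.

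The main obstacle, and the step deserving the most care, is verifying the isomorphism $\overrightarrow{G}\otimes\overrightarrow{K}_{1,n}^{l}\cong\overrightarrow{(G\odot\overline{K}_n)}$ rigorously, in particular keeping track of orientations and confirming that no extra edges or loops survive in the underlying simple-type graph and that the degree of every pendant vertex is exactly $1$. One has to use that $\overrightarrow{G}$ is $1$-regular as a digraph (each vertex appearing as the tail of exactly one arc and the head of exactly one arc) so that each central copy picks up exactly one ``cycle'' edge on each side; if $G$ were allowed to be a general edge-magic graph this correspondence would fail, which is exactly why the hypothesis restricts to $2$-regular $G$. A secondary, purely bookkeeping point is to confirm that the hypotheses of Theorem~\ref{lowerboundvalences_1} are met, namely that $\overrightarrow{G}$ is genuinely edge-magic with $\tau_{\overrightarrow{G}}\neq\emptyset$ whenever $G$ is — but this is immediate from the hypothesis that $G$ is edge-magic together with the convention that a digraph inherits the labelings of its underlying graph.
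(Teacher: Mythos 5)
Your proposal matches the paper's proof essentially verbatim: the paper likewise writes $G$ as a disjoint union of strongly oriented cycles, observes that $\mathrm{und}(\overrightarrow{G}\otimes \overrightarrow{K}_{1,n}^l)\cong G\odot\overline{K}_n$, and applies Theorem~\ref{lowerboundvalences_1}. Your careful check of the isomorphism (each central vertex picks up $n$ pendants via its unique out-arc, and each pendant has degree $1$ because in-degrees are $1$) is exactly the ``by definition of the $\otimes$-product'' step the paper leaves implicit, so nothing is missing.
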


\begin{proof}
Let $G=C_{m_1}\oplus C_{m_2}\oplus \cdots \oplus C_{m_k}$ and let $\overrightarrow{G}=C_{m_1}^+\oplus C_{m_2}^+\oplus \cdots \oplus C_{m_k}^+$ be an orientation of $G$ in which each cycle is strongly oriented. Then $\overrightarrow{G}\otimes \overrightarrow{K}_{1,n}^l=(C_{m_1}^+\otimes \overrightarrow{K}_{1,n}^l) \oplus (C_{m_2}^+\otimes \overrightarrow{K}_{1,n}^l) \oplus \cdots \oplus (C_{m_k}^+\otimes \overrightarrow{K}_{1,n}^l)$. Note that since $G$ is bipartite, all cycles should be of even length and by definition of $\otimes$-product, $G \odot \overline{K}_{n} \cong und(\overrightarrow{G}\otimes \overrightarrow{K}_{1,n}^l)$. Thus by Theorem \ref{lowerboundvalences_1}, $|\tau_{G\odot\overline{K}_n}| \geq (n+1)|\tau_G|+2$.
\end{proof}

\begin{example}
Let $g$ be an edge-magic labeling of $\overrightarrow{C_4}$ and $f_r$ be the super edge-magic labeling of $\overrightarrow{K}_{1,2}^l$ that assigns the label $r$ to the central vertex, $1 \leq r \leq 3.$ Then the valence of the induced labeling $\widehat{g}_r$ is $\hbox{val}(\widehat{g}_r)=3(\hbox{val}(g)-2)+r+1 \in [3(\hbox{val}(g)-2)+2,3(\hbox{val}(g)-2)+4]$. Let 
$\alpha: 1\bar 56\bar 42\bar 73\bar81$, 
$\beta =1\bar 75\bar 62\bar 38\bar41$, 
$\gamma= 1\bar 58\bar 24\bar 37\bar61$ and 
$\delta= 8\bar 43\bar 57\bar 26\bar18$, where $i\bar mj$  indicates that $m$ is the label assineg to the edge $ij$. Since $\tau_{C_4}=[12,15]=[\hbox{val}(\alpha),\hbox{val}(\beta)]$ we get  different $12$ edge-magic valences $[32,43]$ for the induced labeling of $C_4 \odot \overline{K}_2 \cong und(\overrightarrow{C_4} \otimes \overrightarrow{K}_{1,2}).$ Moreover, since the condition $\hbox{val}(\beta)-\hbox{val}(\alpha)<(\hbox{val}(\alpha)-(p+q+2))n$, is satisfied for $n\ge 2$, by using Theorem \ref{producte_super_k}, $\hbox{val}(\widetilde{g}_r)=8(1+r)+\hbox{val}(g)$ which gives, associated to a labeling $g$ two new valences, namely $\hbox{val}(\widetilde{g_1})$ and $\hbox{val}(\widetilde{g_3})$ which gives in total $20$ valences. The induced labelings  and they are shown in Fig. \ref{allvalences of c4coronak2}, according to the notation introduced above (for clarity reasons, only the labels of the vertices are shown). Notice that, by using the missing labels, there is only one way to complete the edge-magic labelings obtained in Fig. \ref{allvalences of c4coronak2}. 
The minimum induced sum together with the maximum unused label provides the valence of the labeling.
\end{example}
\begin{remark}
For a given even $m$, the magic interval for crowns of the form $C_m\odot \overline{K}_n$ is $[mn+2+5m/2,2mn+ 1+7m/2]$ ( see Section 2, in \cite{PEM_LMR}). Thus, for $m=4$, the magic interval is $[28,47]$. Hence,  the crown $C_4\odot \overline{K}_2$ is perfect edge-magic. 

\end{remark}
It is well known that all cycles are edge-magic \cite{GodSla}. Thus, the following corollary follows:

\begin{corollary}
Fix $m \in \mathcal{N}$. Then $\lim_{n\to\infty} |\tau_{C_m\odot \overline{K}_n}|=\infty$.
\end{corollary}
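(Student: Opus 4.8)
The plan is to combine the two lower bound theorems already established with the fact that all cycles are edge-magic. First I would recall that for a fixed even $m$, the crown $C_m \odot \overline{K}_n$ can be realized as $\mathrm{und}(\overrightarrow{C_m} \otimes \overrightarrow{K}_{1,n}^l)$ exactly as in the proof of Corollary \ref{bipartite_2_regular}; the orientation issue is handled by strongly orienting the cycle. For odd $m$, the crown is still of the form $C_m \odot \overline{K}_n$, which sits inside the family $\mathcal{S}_n^k$ studied earlier, and in any case Corollary \ref{bipartite_2_regular} together with the earlier literature on $C_{p^k}\otimes\overline{K}_n$ and $C_{pq}\otimes\overline{K}_n$ gives growth; but the cleanest route that works uniformly is to invoke Theorem \ref{lowerboundvalences_1} (or Corollary \ref{bipartite_2_regular}) directly with $G = C_m$.

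The key steps, in order, are: (1) Observe that $C_m$ is edge-magic for every $m$ by \cite{GodSla}, so $|\tau_{C_m}| \geq 1$; in particular $\overrightarrow{C_m}$ (any orientation, or the strong orientation) is an edge-magic digraph with $|\tau_{\overrightarrow{C_m}}| \geq 1$. (2) Apply Theorem \ref{lowerboundvalences_1} with $G = C_m$ (viewed with a suitable orientation, as in Corollary \ref{bipartite_2_regular} when $m$ is even) to obtain
\[
|\tau_{C_m \odot \overline{K}_n}| \;=\; |\tau_{\overrightarrow{C_m}\otimes \overrightarrow{K}_{1,n}^l}| \;\geq\; (n+1)\,|\tau_{\overrightarrow{C_m}}| + 2 \;\geq\; (n+1)\cdot 1 + 2 \;=\; n+3.
\]
(3) Since $m$ is fixed and the right-hand side $n+3 \to \infty$ as $n \to \infty$, conclude that $\lim_{n\to\infty} |\tau_{C_m\odot\overline{K}_n}| = \infty$. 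One should be slightly careful to note that the identification $C_m \odot \overline{K}_n \cong \mathrm{und}(\overrightarrow{C_m}\otimes\overrightarrow{K}_{1,n}^l)$ requires choosing the orientation of $C_m$ so that the $\otimes$-product reproduces the corona; for even $m$ this is precisely the content of Corollary \ref{bipartite_2_regular}, and for general $m$ one checks it directly from the definition of the $\otimes_h$-product with $h$ the constant function sending each arc to $\overrightarrow{K}_{1,n}^l$, using that a strong orientation of $C_m$ makes each vertex have out-degree one.

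I expect no serious obstacle here: the statement is essentially an immediate corollary, and the only thing demanding a line of justification is that $C_m \odot \overline{K}_n$ genuinely falls under the hypotheses of Theorem \ref{lowerboundvalences_1} (equivalently Corollary \ref{bipartite_2_regular}) — namely that it is obtained as the relevant $\otimes$-product of an edge-magic digraph with $\overrightarrow{K}_{1,n}^l$ — which has already been argued in the proof of Corollary \ref{bipartite_2_regular}. If one prefers the sharper bound, Theorem \ref{lowerboundvalences_2} could be invoked instead, but it is unnecessary: the bound $|\tau_{C_m\odot\overline{K}_n}| \geq n+3$ alone already forces divergence, so the proof is a two-line deduction once the identification is in place.
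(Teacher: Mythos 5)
Your proof is correct and follows exactly the route the paper intends: the paper derives this corollary from the fact that all cycles are edge-magic \cite{GodSla} combined with Theorem \ref{lowerboundvalences_1} (via Corollary \ref{bipartite_2_regular}), yielding $|\tau_{C_m\odot\overline{K}_n}|\geq (n+1)|\tau_{C_m}|+2\geq n+3\to\infty$. Your additional observation that the strong orientation of $C_m$ makes $\mathrm{und}(\overrightarrow{C_m}\otimes\overrightarrow{K}_{1,n}^l)\cong C_m\odot\overline{K}_n$ for every $m$, not just even $m$, is the right way to cover the odd case, which the paper leaves implicit.
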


A similar argument to that of the first part in Theorem \ref{lowerboundvalences_1} can be used to prove the following theorem.

\begin{theorem}\label{sem lowerbound}
Let $G$ be a super edge-magic graph. Then $ |\sigma_{\overrightarrow{G} \otimes \overrightarrow{K}_{1,n}^l}| \geq (n+1)|\sigma_{\overrightarrow{G}}|$.
\end{theorem}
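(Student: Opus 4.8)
The plan is to mimic the first part of the proof of Theorem \ref{lowerboundvalences_1}, which only used the product $\overrightarrow{G} \otimes \overrightarrow{K}_{1,n}^l$ (not the reversed product) and Lemma \ref{repeatedvalences}. The point is that Lemma \ref{repeatedvalences} is stated for \emph{both} edge-magic and super edge-magic labelings, so the same bookkeeping goes through verbatim in the super edge-magic setting, and we simply do not have the extra ``$+2$'' because we are not combining with Theorem \ref{producte_super_k} to push out the two endpoint valences.

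More precisely, here are the steps I would carry out. First, fix a super edge-magic graph $G$ and let $f_r$, for $1 \le r \le n+1$, be the super edge-magic labeling of $K_{1,n}^l$ assigning label $r$ to the central vertex; Lemma \ref{k1nsem} guarantees these exist and that they realize $n+1$ distinct valences, so $\overrightarrow{K}_{1,n}^l \in \mathcal{S}_{n+1}^{r+1}$ for each $r$. Second, for each super edge-magic labeling $g$ of $\overrightarrow{G}$, invoke Lemma \ref{repeatedvalences}(i) to get that the induced labeling $\widehat{g}_r$ of $\overrightarrow{G} \otimes \overrightarrow{K}_{1,n}^l$ is super edge-magic (since $g$ and $f_r$ are both super edge-magic, the product is super edge-magic by Theorem \ref{spk}) with valence $(n+1)(\hbox{val}(g)-2)+r+1$. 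Third, observe that for a fixed $g$, as $r$ runs over $[1,n+1]$ these valences are $n+1$ consecutive integers $(n+1)(\hbox{val}(g)-2)+2,\dots,(n+1)(\hbox{val}(g)-2)+(n+2)$, hence all distinct. Fourth, apply Lemma \ref{repeatedvalences}(ii): if $g,g'$ are super edge-magic labelings of $\overrightarrow{G}$ with $\hbox{val}(g) < \hbox{val}(g')$ then $\hbox{val}(\widehat{g}_{n+1}) < \hbox{val}(\widehat{g'}_{1})$, so the blocks of $n+1$ valences coming from distinct valences of $\overrightarrow{G}$ are pairwise disjoint. Combining, each of the $|\sigma_{\overrightarrow{G}}|$ distinct super edge-magic valences of $\overrightarrow{G}$ yields a disjoint block of $n+1$ distinct super edge-magic valences of $\overrightarrow{G} \otimes \overrightarrow{K}_{1,n}^l$, so $|\sigma_{\overrightarrow{G} \otimes \overrightarrow{K}_{1,n}^l}| \ge (n+1)|\sigma_{\overrightarrow{G}}|$.

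There is no real obstacle here; the theorem is essentially a corollary of Lemma \ref{repeatedvalences} restricted to its ``super'' case together with the monotonicity already noted in its proof. The only point requiring a line of care is the claim that these are all genuinely \emph{super} edge-magic valences of the product (not merely edge-magic) — this is handled by Theorem \ref{spk}, since the product of two super edge-magic digraphs is super edge-magic, and by the fact that every valence counted in $\sigma_{\overrightarrow{G}}$ is realized by an actual super edge-magic labeling $g$, whose induced $\widehat{g}_r$ is then also super edge-magic. I would phrase the argument in a single short paragraph: reproduce the first paragraph of the proof of Theorem \ref{lowerboundvalences_1} with ``edge-magic'' replaced by ``super edge-magic'' and with the references to Theorem \ref{producte_super_k} deleted, and stop at the bound $(n+1)|\sigma_{\overrightarrow{G}}|$.

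\begin{proof}
Let $f_r$ be the super edge-magic labeling of $K_{1,n}^l$ that assigns the label $r$ to the central vertex, $1\le r \le n+1$; these exist by Lemma \ref{k1nsem}, and $\overrightarrow{K}_{1,n}^l \in \mathcal{S}_{n+1}^{r+1}$. Let $g$ be a super edge-magic labeling of $\overrightarrow{G}$. By Theorem \ref{spk}, the induced labeling $\widehat{g}_r$ of $\overrightarrow{G}\otimes \overrightarrow{K}_{1,n}^l$ is super edge-magic, and by Lemma \ref{repeatedvalences}(i), $\hbox{val}(\widehat{g}_r)=(n+1)[\hbox{val}(g)-2]+r+1$. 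For a fixed $g$, as $r$ ranges over $[1,n+1]$ these are the $n+1$ consecutive integers $(n+1)[\hbox{val}(g)-2]+2,\dots,(n+1)[\hbox{val}(g)-2]+n+2$, hence pairwise distinct. Moreover, by Lemma \ref{repeatedvalences}(ii), if $g'$ is a super edge-magic labeling of $\overrightarrow{G}$ with $\hbox{val}(g)<\hbox{val}(g')$ then $\hbox{val}(\widehat{g}_{n+1})<\hbox{val}(\widehat{g'}_{1})$, so the blocks of $n+1$ consecutive valences arising from distinct super edge-magic valences of $\overrightarrow{G}$ are pairwise disjoint. Therefore each of the $|\sigma_{\overrightarrow{G}}|$ distinct super edge-magic valences of $\overrightarrow{G}$ contributes $n+1$ distinct super edge-magic valences to $\overrightarrow{G}\otimes \overrightarrow{K}_{1,n}^l$, and these contributions are disjoint. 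Hence $|\sigma_{\overrightarrow{G}\otimes \overrightarrow{K}_{1,n}^l}|\ge (n+1)|\sigma_{\overrightarrow{G}}|$.
\end{proof}
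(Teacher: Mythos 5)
Your proof is correct and follows exactly the route the paper intends: the paper gives no separate argument for this theorem, stating only that ``a similar argument to that of the first part in Theorem \ref{lowerboundvalences_1}'' applies, and your write-up is precisely that first part with ``edge-magic'' replaced by ``super edge-magic'' and the two extra endpoint valences from Theorem \ref{producte_super_k} dropped. The added care about why the induced valences are genuinely \emph{super} edge-magic (via Theorem \ref{spk}) is a reasonable elaboration of what the paper leaves implicit.
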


\section{A relation between (super) edge-magic labelings and graph decompositions}
\label{section_decompositions}

Let $G$ be a bipartite graph with stable sets $X=\{x_i\}_{i=1}^s$ and $Y=\{y_j\}_{j=1}^t$. Assume that $G$ admits a decomposition $G\cong H_1\oplus H_2$. Then we denote by $S_2(G;H_1,H_2)$ the graph with vertex and edge sets defined as follows:
\begin{eqnarray*}
  V(S_2(G;H_1,H_2)) &=& X\cup Y\cup X'\cup Y', \\
  E(S_2(G;H_1,H_2)) &=& E(G)\cup \{x_iy_j': x_iy_j\in E(H_1)\}\cup \{x_i'y_j: x_iy_j\in E(H_2)\},
\end{eqnarray*}
where $X'=\{x_i'\}_{i=1}^s$ and $Y'=\{y_j'\}_{j=1}^t$.

We are ready to state and prove the next theorem.

\begin{theorem}\label{theo: SEM new_bipartite graph}
Let $G$ be a bipartite (super) edge-magic simple graph with stable sets $X$ and $Y$. Assume that $G$ admits a decomposition $G\cong H_1\oplus H_2$. Then, the graph $S_2(G;H_1,H_2)$ is (super) edge-magic.
\end{theorem}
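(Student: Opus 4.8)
The plan is to obtain a labeling of $S_2(G;H_1,H_2)$ by \emph{doubling} a given (super) edge-magic labeling of $G$: each element of $G$ is split into a pair of consecutive labels, one for each of its two incarnations in $S_2$. Write $p=|V(G)|$ and $q=|E(G)|$, so $S_2(G;H_1,H_2)$ has $2p$ vertices and $2q$ edges and must be labeled with $[1,2(p+q)]$. Fix a (super) edge-magic labeling $f:V(G)\cup E(G)\to[1,p+q]$ of $G$ with valence $k$. (A direct transplant of $f$, even after a global shift, is bound to fail: the $2p$ vertices and the block structure of the $2q$ edge sums call for incompatible offsets as soon as $p\ne q$, and the doubling interleaves the two copies of $G$ and removes this obstruction.) Notice that each vertex $v\in X\cup Y=V(G)$ occurs in $S_2$ both as $v$ and as its copy $v'\in X'\cup Y'$, and each edge $e=x_iy_j\in E(G)$ occurs in $S_2$ both as $e$ itself (in the copy of $E(G)$) and as a single ``primed'' copy $e'$, namely $e'=x_iy_j'$ when $e\in E(H_1)$ and $e'=x_i'y_j$ when $e\in E(H_2)$; this is well defined and exhausts all $2q$ edges of $S_2$ precisely because $\{E(H_1),E(H_2)\}$ partitions $E(G)$.

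Next I would set $\ell(v)=2f(v)-1$ and $\ell(v')=2f(v)$ for every vertex $v$ of $G$, and $\ell(e)=2f(e)$ and $\ell(e')=2f(e)-1$ for every edge $e$ of $G$ — observe that the odd label goes to the unprimed \emph{vertices} but to the primed \emph{edges}. Since $f$ is a bijection onto $[1,p+q]$ and the pairs $\{2c-1,2c\}$ for $c\in[1,p+q]$ tile $[1,2(p+q)]$, the map $\ell$ is a bijection $V(S_2)\cup E(S_2)\to[1,2(p+q)]$; and if $f$ is super edge-magic then $f(V(G))=[1,p]$, whence $\ell(V(S_2))=\{2c-1,2c:c\in[1,p]\}=[1,2p]$, so $\ell$ is automatically super. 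It then remains to check the magic condition on the three kinds of edges of $S_2$: for an unprimed edge $x_iy_j$ the vertex--edge--vertex sum is $(2f(x_i)-1)+2f(x_iy_j)+(2f(y_j)-1)=2k-2$; for a primed copy $x_iy_j'$ of an edge $x_iy_j\in E(H_1)$ it is $(2f(x_i)-1)+(2f(x_iy_j)-1)+2f(y_j)=2k-2$; and for a primed copy $x_i'y_j$ of an edge $x_iy_j\in E(H_2)$ it is $2f(x_i)+(2f(x_iy_j)-1)+(2f(y_j)-1)=2k-2$. Hence $\ell$ is an edge-magic (and, when $f$ is, super edge-magic) labeling of $S_2(G;H_1,H_2)$ of valence $2k-2$.

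The step that really needs care is the parity bookkeeping above. A naive doubling that hands the odd label to every unprimed element (vertices and edges alike) makes the unprimed edges have valence $2k-3$ but the primed edges valence $2k-1$, so it produces no magic labeling at all; swapping the roles of odd and even between vertices and edges is exactly what forces all three cases to collapse to the common value $2k-2$, after which the verification is the one-line computation just displayed. (By symmetry, the opposite convention — odd labels to unprimed edges and to primed vertices — works equally well and yields valence $2k-1$.) Finally, everything is uniform in the connectivity of $G$: an isolated vertex of $G$ merely produces two isolated, correctly labeled vertices of $S_2$, and $S_2$ is simple whenever $G$ is (its edges split into three families lying between pairwise distinct pairs of parts), so the labeling produced is of the required type.
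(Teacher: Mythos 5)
Your proof is correct: the doubling $\ell(v)=2f(v)-1$, $\ell(v')=2f(v)$, $\ell(e)=2f(e)$, $\ell(e')=2f(e)-1$ is a genuine bijection onto $[1,2(p+q)]$, it preserves the super property, and all three edge types do collapse to the common valence $2\,\hbox{val}(f)-2$; your remark that the parity roles of vertices and edges must be swapped is exactly the right point of care. The paper reaches the same conclusion by a different packaging: it orients $H_1$ from $X$ to $Y$ and $H_2$ from $Y$ to $X$, observes that $S_2(G;H_1,H_2)\cong \hbox{und}(\overrightarrow{G}\otimes\overrightarrow{K}_{1,1}^l)$ with $\overrightarrow{K}_{1,1}^l\in\mathcal{S}_2^2$ (vertices $\{1,2\}$, arcs $(1,1)$ and $(1,2)$), and then invokes Theorem \ref{spk}. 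If you unwind the product labeling of Remark \ref{remarkspk} with $p=2$, $k=2$ — vertex $(a,i)\mapsto 2(a-1)+i$, arc $\mapsto 2(e-1)+4-(i+j)$ — you get precisely your $\ell$, including the parity swap between vertices and edges, so the two proofs produce the identical labeling. What your elementary version buys is self-containedness and transparency about why the construction works; what the paper's version buys is that it slots directly into Lemma \ref{valenceinducedproduct} and the $\mathcal{S}_{n+1}^{r+1}$ framework, which is what powers the generalization to $S_{2n}(G;H_1,H_2)$ and the valence-counting results (Theorems \ref{S_2n SEM} and \ref{S_2n EM}) later in the section. Your closing observation about the complementary convention giving valence $2\,\hbox{val}(f)-1$ corresponds, in the paper's language, to choosing the other super edge-magic labeling $f_r$ of $K_{1,1}^l$ (i.e.\ $r=2$ instead of $r=1$), which is exactly the mechanism exploited in Lemma \ref{repeatedvalences}.
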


\begin{proof}
Let $f$ be a (super) edge-magic labeling of $G$, and assume that the edges of $H_1$ are directed from $X$ to $Y$ and the edges of $H_2$ are directed from $Y$ to $X$ in $G$, obtaining the digraph $\overrightarrow{G}$. Let $\overrightarrow{K}_{1,1}^l$ be the super edge-magic labeled digraph with $V(\overrightarrow{K}_{1,1}^l)=\{1,2\}$ and $E(\overrightarrow{K}_{1,1}^l)=\{(1,1),(1,2)\}$. By Theorem \ref{spk}, we have that the graph und$(\overrightarrow{G}\otimes \overrightarrow{K}_{1,1}^l)$ is (super) edge-magic. Moreover, an easy check shows that the bijective function $\phi: V(\overrightarrow{G}\otimes \overrightarrow{K}_{1,1}^l)\rightarrow V(S_2(G;H_1,H_2))$ defined by $\phi (v,1)=v$ and $\phi (v,2)=v'$ is an isomorphism between und$(\overrightarrow{G}\otimes \overrightarrow{K}_{1,1}^l)$ and $S_2(G;H_1,H_2)$. Therefore, the graph $S_2(G;H_1,H_2)$ is (super) edge-magic.
\end{proof}

Next, we show an example.
\begin{example}
Consider the edge-magic labeling of $K_{3,3}$ shown in Fig. \ref{Fig_7}. The same figure shows a partition of the edges and  a possible orientation of them when $X=\{1,2,3\}$ and $Y=\{4,8,12\}$. The construction given in the proof of Theorem \ref{theo: SEM new_bipartite graph} when each vertex $(a,i)$ is labeled $2(a-1)+i$ and each edge $(a,i)(b,j)$ is labeled $2(e-1)+4-(i+j)$ (where $e$ is the label of $(a,b)$ in $D$) results into the graph in Fig. \ref{Fig_8}.

\begin{figure}[h]
  \centering
  \includegraphics[width=79pt]{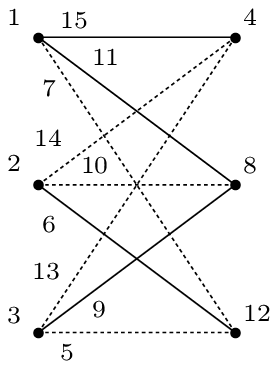}\includegraphics[width=79pt]{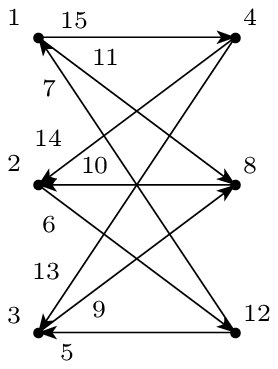}\\
  \caption{A decomposition of $K_{3,3}$ and the induced orientation.}\label{Fig_7}
\end{figure}
\begin{figure}[h]
  \centering
  \includegraphics[width=192pt]{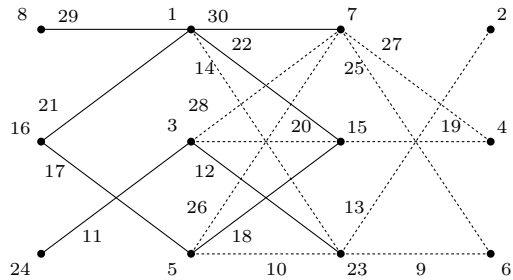}\\
  \caption{An edge-magic labeling of $S_2(K_{3,3};H_1,H_2)$.}\label{Fig_8}
\end{figure}
\end{example}

Kotzig and Rosa \cite{KotRos70} proved that every complete bipartite graph is edge-magic. It is clear that Theorem \ref{theo: SEM new_bipartite graph} works very nicely when the graph $G$ under consideration is a complete bipartite graph and many new edge-magic graphs can be obtained.  Theorem \ref{theo: SEM new_bipartite graph} can be easily extended. Let us do so next.

Let $G$ be a bipartite graph with stable sets $X=\{x_i\}_{i=1}^s$ and $Y=\{y_j\}_{j=1}^t$. Assume that $G$ admits a decomposition $G\cong H_1\oplus H_2$. Then we define $S_{2n}(G;H_1,H_2)$ to be the graph with vertex and edge sets as follows:
\begin{eqnarray*}
  V(S_{2n}(G;H_1,H_2)) &=& X\cup Y\cup (\cup_{k=1}^nX_k)\cup (\cup_{k=1}^nY_k), \\
   E(S_{2n}(G;H_1,H_2)) &=& E(G)\cup \{x_iy_j^k: x_iy_j\in E(H_1)\}\cup \{x_i^ky_j: x_iy_j\in E(H_2)\},
\end{eqnarray*}
where $X_k=\{x_i^k\}_{i=1}^s$ and $Y_k=\{y_j^k\}_{j=1}^t$.

\begin{lemma}\label{S2n isomorphism}
Let $G$ be a bipartite simple graph with stable sets $X$ and $Y$. Assume that $G$ admits a decomposition $G\cong H_1\oplus H_2$. Then, there exists an orientation of $G$ and $K_{1,n}^l$, namely $\overrightarrow{G}$ and $\overrightarrow{K}_{1,n}^l$ respectively, such that $S_{2n}(G;H_1,H_2) \cong und(\overrightarrow{G}\otimes \overrightarrow{K}_{1,n}^l).$
\end{lemma}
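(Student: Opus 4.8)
The plan is to generalize the argument used in the proof of Theorem \ref{theo: SEM new_bipartite graph}, where the case $n=1$ was handled by taking $\overrightarrow{K}_{1,1}^l$. First I would fix the orientation of $G$: direct every edge of $H_1$ from $X$ to $Y$ and every edge of $H_2$ from $Y$ to $X$, calling the resulting digraph $\overrightarrow{G}$. Then I would choose the orientation $\overrightarrow{K}_{1,n}^l$ of $K_{1,n}^l$ to be the digraph with vertex set $\{1,2,\dots,n+1\}$, with the loop oriented as the arc $(1,1)$ and each remaining edge oriented away from the central vertex, i.e. $E(\overrightarrow{K}_{1,n}^l)=\{(1,1)\}\cup\{(1,k+1):1\le k\le n\}$. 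The single constant function $h$ sends every arc of $\overrightarrow{G}$ to this fixed digraph, so $\overrightarrow{G}\otimes\overrightarrow{K}_{1,n}^l$ is the relevant Kronecker-type product.

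Next I would write down the vertex and arc sets of $\overrightarrow{G}\otimes\overrightarrow{K}_{1,n}^l$ explicitly from the definition of the $\otimes_h$-product and compare them term by term with the definition of $S_{2n}(G;H_1,H_2)$. The vertex set is $V(\overrightarrow{G})\times\{1,\dots,n+1\}$; I would define $\phi$ by $\phi(v,1)=v$ for $v\in X\cup Y$ and $\phi(v,k+1)=v^k$ for $1\le k\le n$, where $v^k$ denotes the corresponding copy in $X_k$ or $Y_k$. This is plainly a bijection onto $X\cup Y\cup(\cup_k X_k)\cup(\cup_k Y_k)$. For the edges: an arc $((a,i),(b,j))$ exists in the product exactly when $(a,b)\in E(\overrightarrow{G})$ and $(i,j)\in E(\overrightarrow{K}_{1,n}^l)$. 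Because of how the loop and the pendant arcs are oriented, $(i,j)$ is either $(1,1)$ — which forces $a\in X$, $b\in Y$, $ab\in E(H_1)$, and yields the edge $ab\in E(G)$ — or $(1,k+1)$ — which, again using that the arcs of $H_1$ go $X\to Y$ and those of $H_2$ go $Y\to X$, yields precisely the edges $x_iy_j^k$ with $x_iy_j\in E(H_1)$ and $x_i^ky_j$ with $x_iy_j\in E(H_2)$. Matching these three families against $E(S_{2n}(G;H_1,H_2))$ shows $\phi$ carries edges to edges bijectively, so $\phi$ is the desired isomorphism.

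The main obstacle, such as it is, lies in bookkeeping: one must be careful that the loop $(1,1)$ at the center contributes exactly one copy of each original edge of $G$ (rather than of $H_1$ only or $H_2$ only), and that the orientation conventions on $H_1$ and $H_2$ are consistently used so that a pendant arc $(1,k+1)$ produces an edge with the $X$-endpoint unprimed when the original edge is in $H_1$ but the $Y$-endpoint unprimed when it is in $H_2$. Once the case analysis on which arc of $\overrightarrow{K}_{1,n}^l$ is used is laid out cleanly, the verification that $\phi$ and the induced edge map are bijections is routine, and the lemma follows; note that, as in Theorem \ref{theo: SEM new_bipartite graph}, this also immediately gives that $S_{2n}(G;H_1,H_2)$ is (super) edge-magic whenever $G$ is, by applying Theorem \ref{spk}.
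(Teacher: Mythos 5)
Your construction is exactly the paper's: the same orientation of $G$ (edges of $H_1$ directed $X\to Y$, edges of $H_2$ directed $Y\to X$), the same digraph $\overrightarrow{K}_{1,n}^l$ with arc set $\{(1,k):k\in[1,n+1]\}$, and the same bijection $\phi(v,1)=v$, $\phi(v,k+1)=v^k$; the paper simply leaves the edge-by-edge verification as ``an easy check,'' which you carry out. One clause in your case analysis is wrong, though you correct it yourself later: the loop arc $(1,1)$ does \emph{not} force $a\in X$, $b\in Y$, $ab\in E(H_1)$ --- it pairs with \emph{every} arc $(a,b)$ of $\overrightarrow{G}$, including those from $H_2$ (which run $Y\to X$), and thus contributes all of $E(G)$, exactly as your final paragraph states; delete the erroneous parenthetical and the proof is fine.
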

\begin{proof}
Assume that the digraph $\overrightarrow{G}$ is obtained from $G$ by orienting the edges of $H_1$ from $X$ to $Y$ and the edges of $H_2$ from $Y$ to $X$ in $G$. Let $\overrightarrow{K}_{1,n}^l$ be the digraph with $V(\overrightarrow{K}_{1,n}^l)=[1,n+1]$ and $E(\overrightarrow{K}_{1,n}^l)=\{(1,k):k \in [1,n+1]\}$. An easy check shows that the bijective function $\phi: V(\overrightarrow{G}\otimes \overrightarrow{K}_{1,n}^l)\rightarrow V(S_{2n}(G;H_1,H_2))$ defined by $\phi (v,1)=v$ and $\phi (v,k+1)=v^{k}, \ k \in [1,n]$ is an isomorphism between und$(\overrightarrow{G}\otimes \overrightarrow{K}_{1,n}^l)$ and $S_{2n}(G;H_1,H_2)$.
\end{proof}

We are ready to state and prove the next theorem.

\begin{theorem}\label{coro: SEM new_multipartite graph}
Let $G$ be a bipartite (super) edge-magic simple graph with stable sets $X$ and $Y$. Assume that $G$ admits a decomposition $G\cong H_1\oplus H_2$. Then, the graph $S_{2n}(G;H_1,H_2)$ is (super) edge-magic.
\end{theorem}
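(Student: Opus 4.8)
The plan is to reduce everything to the machinery already set up in the excerpt, so that no fresh computation is needed. The backbone is Lemma \ref{S2n isomorphism}, which exhibits $S_{2n}(G;H_1,H_2)$ as $\textup{und}(\overrightarrow{G}\otimes\overrightarrow{K}_{1,n}^l)$ for a specific orientation of $G$ (edges of $H_1$ directed $X\to Y$, edges of $H_2$ directed $Y\to X$) and the specific orientation of $K_{1,n}^l$ with arc set $\{(1,k):k\in[1,n+1]\}$. Since $\textup{(super)}$ edge-magicness is an isomorphism invariant, it suffices to prove that $\textup{und}(\overrightarrow{G}\otimes\overrightarrow{K}_{1,n}^l)$ is $\textup{(super)}$ edge-magic, and for that I intend to invoke Theorem \ref{spk}.

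First I would check that $\overrightarrow{K}_{1,n}^l$ can be placed in some family $\mathcal{S}_{n+1}^k$. By Lemma \ref{k1nsem} the graph $K_{1,n}^l$ is (perfect) super edge-magic, so it admits a super edge-magic labeling; fixing one such labeling and renaming the vertices of $\overrightarrow{K}_{1,n}^l$ by their labels, the digraph lies in $\mathcal{S}_{n+1}^k$, where $k$ is the minimum sum of labels over adjacent vertices for that labeling. (The precise value of $k$ is immaterial for the qualitative conclusion; any fixed choice works.) Next, since $G$ is $\textup{(super)}$ edge-magic and a digraph is by definition $\textup{(super)}$ edge-magic exactly when its underlying graph is, the orientation $\overrightarrow{G}$ from Lemma \ref{S2n isomorphism} is a $\textup{(super)}$ edge-magic digraph. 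Applying Theorem \ref{spk} with $D=\overrightarrow{G}$ and $h:E(\overrightarrow{G})\to\mathcal{S}_{n+1}^k$ the constant function with value $\overrightarrow{K}_{1,n}^l$ then gives that $\textup{und}(\overrightarrow{G}\otimes\overrightarrow{K}_{1,n}^l)$ is $\textup{(super)}$ edge-magic, and Lemma \ref{S2n isomorphism} transfers this to $S_{2n}(G;H_1,H_2)$.

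I do not anticipate a genuine obstacle here: the argument is essentially the bookkeeping of Theorem \ref{theo: SEM new_bipartite graph} with $\overrightarrow{K}_{1,1}^l$ replaced by $\overrightarrow{K}_{1,n}^l$. The one point that deserves an explicit line is that the particular orientation of $K_{1,n}^l$ dictated by Lemma \ref{S2n isomorphism} — a loop at the centre together with all arcs emanating from the centre — is compatible with a super edge-magic labeling; but this is immediate, because super edge-magicness of a digraph depends only on its underlying graph, which is $K_{1,n}^l$, covered by Lemma \ref{k1nsem}. For the "super" half of the statement one only needs to observe that the chain of implications above runs verbatim, since Theorem \ref{spk} preserves the "super" property and $\overrightarrow{K}_{1,n}^l$ is taken super edge-magic throughout.
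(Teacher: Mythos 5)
Your argument is correct and follows essentially the same route as the paper: both reduce the statement to Lemma \ref{S2n isomorphism} combined with Theorem \ref{spk} applied to the oriented $\overrightarrow{K}_{1,n}^l$ viewed as an element of $\mathcal{S}_{n+1}^k$. Your extra remark justifying via Lemma \ref{k1nsem} that the prescribed orientation of $K_{1,n}^l$ carries a super edge-magic labeling is a detail the paper leaves implicit, but it changes nothing of substance.
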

\begin{proof}
Let $f$ be a (super) edge-magic labeling of $G$, and assume that the edges of $H_1$ are directed from $X$ to $Y$ and the edges of $H_2$ are directed from $Y$ to $X$ in $G$, obtaining the digraph $\overrightarrow{G}$. Let $\overrightarrow{K}_{1,n}^l$ be the super edge-magic labeled digraph with $V(\overrightarrow{K}_{1,n}^l)=[1,n+1]$ and $E(\overrightarrow{K}_{1,n}^l)=\{(1,k):k \in [1,n+1]\}$. By Theorem \ref{spk}, we have that the graph und$(\overrightarrow{G}\otimes \overrightarrow{K}_{1,n}^l)$ is (super) edge-magic. By Lemma \ref{S2n isomorphism}, $S_{2n}(G;H_1,H_2) \cong und(\overrightarrow{G}\otimes \overrightarrow{K}_{1,n}^l).$ Therefore, the graph $S_{2n}(G;H_1,H_2)$ is (super) edge-magic.
\end{proof}

With the help of Lemma \ref{k1nsem}, we can generalize Theorem \ref{coro: SEM new_multipartite graph} very easily. We do it in the following two results.

\begin{theorem}\label{S_2n SEM}
Let $G$ be a bipartite super edge-magic simple graph with stable sets $X$ and $Y$. Assume that $G$ admits a decomposition $G\cong H_1\oplus H_2$. Then $|\sigma_{S_{2n}(G;H_1,H_2)}| \geq (n+1)|\sigma_G|$.
\end{theorem}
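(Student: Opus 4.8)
The plan is to combine the isomorphism of Lemma~\ref{S2n isomorphism} with the valence formula of Lemma~\ref{valenceinducedproduct}, following the pattern of Lemma~\ref{repeatedvalences} and the first part of the proof of Theorem~\ref{lowerboundvalences_1}, but carrying the \emph{super} edge-magic property through every step.

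First I would fix the orientation $\overrightarrow{G}$ of $G$ used in Lemma~\ref{S2n isomorphism} (edges of $H_1$ directed from $X$ to $Y$, edges of $H_2$ from $Y$ to $X$) together with the digraph $\overrightarrow{K}_{1,n}^l$ of that lemma, so that $S_{2n}(G;H_1,H_2)\cong und(\overrightarrow{G}\otimes\overrightarrow{K}_{1,n}^l)$. By Lemma~\ref{k1nsem}, $K_{1,n}^l$ has exactly $n+1$ super edge-magic valences; concretely, for each $r\in[1,n+1]$ let $f_r$ be the super edge-magic labeling of $\overrightarrow{K}_{1,n}^l$ assigning label $r$ to the central vertex. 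A direct check (as in the proof of Lemma~\ref{repeatedvalences}) shows that the loop at the centre supplies the ``missing'' value $2r$, so the sums of labels of adjacent vertices form the consecutive block $[r+1,r+n+1]$; hence $\overrightarrow{K}_{1,n}^l$ labeled by $f_r$ lies in $\mathcal{S}_{n+1}^{r+1}$.

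Next, for each $v\in\sigma_G$ I would pick a super edge-magic labeling $g$ of $G$ with $\hbox{val}(g)=v$ and regard it as a labeling of $\overrightarrow{G}$, which is then a super edge-magic digraph. Applying Theorem~\ref{spk} with $D=\overrightarrow{G}$ and the constant function $h:E(\overrightarrow{G})\to\mathcal{S}_{n+1}^{r+1}$ sending every arc to $(\overrightarrow{K}_{1,n}^l,f_r)$ gives that $und(\overrightarrow{G}\otimes\overrightarrow{K}_{1,n}^l)\cong S_{2n}(G;H_1,H_2)$ is super edge-magic, and Lemma~\ref{valenceinducedproduct} (applied with $n+1$ in the role of $p$ and $r+1$ in the role of $k$) yields that the induced labeling $\widehat{g}_r$ satisfies
\[
\hbox{val}(\widehat{g}_r)=(n+1)(v-3)+(r+1)+(n+1)=(n+1)(v-2)+r+1 .
\]
Letting $(v,r)$ range over $\sigma_G\times[1,n+1]$ then produces $(n+1)|\sigma_G|$ super edge-magic valences of $S_{2n}(G;H_1,H_2)$.

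Finally I would check that these valences are pairwise distinct: for fixed $v$ the numbers $(n+1)(v-2)+r+1$ with $r\in[1,n+1]$ form the consecutive block $[(n+1)(v-2)+2,\,(n+1)(v-2)+n+2]$, and if $v<v'$ both lie in $\sigma_G$ then $v'\ge v+1$, so the least valence arising from $v'$ is at least $(n+1)(v-1)+2=(n+1)(v-2)+n+3$, which strictly exceeds the largest valence arising from $v$. Hence the blocks attached to distinct elements of $\sigma_G$ are disjoint, all $(n+1)|\sigma_G|$ valences are distinct, and $|\sigma_{S_{2n}(G;H_1,H_2)}|\ge(n+1)|\sigma_G|$. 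I do not expect a genuine obstacle here: the only delicate points are verifying that the labelings $f_r$ realise all minimum sums $r+1$, $r\in[1,n+1]$, and that the $\otimes_h$-product preserves the \emph{super} edge-magic property, and these are precisely what Lemmas~\ref{k1nsem} and~\ref{spk} provide.
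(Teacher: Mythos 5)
Your proposal is correct and follows essentially the same route as the paper: the paper simply cites Lemma~\ref{S2n isomorphism} together with Theorem~\ref{sem lowerbound} (itself a packaging of Lemma~\ref{repeatedvalences} and the first part of Theorem~\ref{lowerboundvalences_1}), whereas you unfold that same argument explicitly via $\overrightarrow{K}_{1,n}^l\in\mathcal{S}_{n+1}^{r+1}$, the valence formula $(n+1)(v-2)+r+1$, and the disjoint-blocks count. No gaps.
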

\begin{proof}
Let $h$ be a super edge-magic labeling of $G$, and assume that the edges of $H_1$ are directed from $X$ to $Y$ and the edges of $H_2$ are directed from $Y$ to $X$ in $G$, obtaining the digraph $\overrightarrow{G}$. Let $f_r$ be the super edge-magic labeling of $\overrightarrow{K}_{1,n}^l$ that assigns the label $r$ to the central vertex with $\hbox{val}(f_r)=2n+3+r, \ 1 \leq r \leq n+1$. Then by Lemma \ref{S2n isomorphism}, $S_{2n}(G;H_1,H_2) \cong und(\overrightarrow{G}\otimes \overrightarrow{K}_{1,n}^l)$ and by Theorem \ref{coro: SEM new_multipartite graph}, it is super edge-magic. By Theorem \ref{sem lowerbound}, $|\sigma_{S_{2n}(G;H_1,H_2)}| \geq (n+1)|\sigma_G|$.
\end{proof}

A similar argument to the one of Theorem \ref{S_2n SEM}, but now using Theorem \ref{lowerboundvalences_1}, allows us to prove the following theorem.

\begin{theorem} \label{S_2n EM}
Let $G$ be a bipartite edge-magic simple graph with stable sets $X$ and $Y$. Assume that $G$ admits a decomposition $G\cong H_1\oplus H_2$. Then $|\tau_{S_{2n}(G;H_1,H_2)}| \geq (n+1)|\tau_G|+2$.
\end{theorem}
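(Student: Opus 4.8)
The plan is to mimic the proof of Theorem \ref{S_2n SEM}, replacing the super edge-magic machinery with its edge-magic analogue. First I would let $g$ be an edge-magic labeling of $G$ and, exactly as in the definition of $S_{2n}(G;H_1,H_2)$, orient the edges of $H_1$ from $X$ to $Y$ and the edges of $H_2$ from $Y$ to $X$, obtaining the digraph $\overrightarrow{G}$; thus $\overrightarrow{G}$ is an edge-magic digraph. Then I would invoke Lemma \ref{S2n isomorphism} to identify $S_{2n}(G;H_1,H_2)$ with $\textrm{und}(\overrightarrow{G}\otimes \overrightarrow{K}_{1,n}^l)$ for the appropriate orientation $\overrightarrow{K}_{1,n}^l$ with $V(\overrightarrow{K}_{1,n}^l)=[1,n+1]$ and arc set $\{(1,k):k\in[1,n+1]\}$.

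Next I would apply Theorem \ref{lowerboundvalences_1} directly: since $G$ (equivalently $\overrightarrow{G}$) is an edge-magic $(p,q)$-graph, that theorem gives $|\tau_{\overrightarrow{G}\otimes\overrightarrow{K}_{1,n}^l}|\ge (n+1)|\tau_{\overrightarrow{G}}|+2$. Because $\tau$ is an invariant of the underlying graph and an orientation of $G$ has the same edge-magic valences as $G$ itself (a digraph admits a labeling iff its underlying graph does), $|\tau_{\overrightarrow{G}}|=|\tau_G|$, and likewise $\tau$ of the product digraph coincides with $\tau$ of $S_{2n}(G;H_1,H_2)$ via the isomorphism of Lemma \ref{S2n isomorphism}. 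Combining these identifications yields $|\tau_{S_{2n}(G;H_1,H_2)}|\ge (n+1)|\tau_G|+2$.

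The one point requiring a little care — and the only genuine obstacle — is making sure the hypotheses of Theorem \ref{lowerboundvalences_1} are actually met, namely that the family of labelings $f_r$ of $\overrightarrow{K}_{1,n}^l$ ($1\le r\le n+1$, assigning label $r$ to the central vertex) are the super edge-magic labelings used there, with valences $2n+3+r$, and that the orientation chosen for $\overrightarrow{K}_{1,n}^l$ matches the one forced by the isomorphism in Lemma \ref{S2n isomorphism}. This is exactly the compatibility that was already verified in the proof of Theorem \ref{S_2n SEM}, so I would simply remark that the same orientation works here and that Lemma \ref{k1nsem} guarantees all $n+1$ values $r\in[1,n+1]$ are realized; the rest is the bookkeeping of the two product orders ($\overrightarrow{G}\otimes\overrightarrow{K}_{1,n}^l$ giving $(n+1)|\tau_G|$ valences, and $\overrightarrow{K}_{1,n}^l\otimes\overrightarrow{G}$ via Theorem \ref{producte_super_k} supplying the two extra ones), all of which is internal to Theorem \ref{lowerboundvalences_1} and need not be repeated. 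Hence the statement follows.
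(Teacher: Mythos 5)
Your proposal is correct and follows exactly the route the paper intends: the paper itself gives no separate proof but states that Theorem \ref{S_2n EM} follows by the argument of Theorem \ref{S_2n SEM} with Theorem \ref{lowerboundvalences_1} in place of Theorem \ref{sem lowerbound}, which is precisely what you do via Lemma \ref{S2n isomorphism}. Your added care about the orientation compatibility and the invariance of $\tau$ under orientation is sound and only makes the argument more explicit than the paper's.
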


Once again, we have the following two easy corollaries.

\begin{corollary}
Let $G$ be a bipartite super edge-magic simple graph with stable sets $X$ and $Y$. If $G$ admits a decomposition $G\cong H_1\oplus H_2$, then $\lim_{n\to\infty}|\sigma_{S_{2n}(G;H_1,H_2)}|=\infty$.
\end{corollary}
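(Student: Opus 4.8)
The plan is to derive this immediately from Theorem \ref{S_2n SEM} together with the trivial observation that a super edge-magic graph has at least one super edge-magic valence. First I would note that if $G$ is a bipartite super edge-magic simple graph admitting a decomposition $G \cong H_1 \oplus H_2$, then by Lemma \ref{super_consecutive} (or simply by hypothesis) $G$ has at least one super edge-magic labeling, so $|\sigma_G| \geq 1$. Theorem \ref{S_2n SEM} then gives $|\sigma_{S_{2n}(G;H_1,H_2)}| \geq (n+1)|\sigma_G| \geq n+1$ for every positive integer $n$.

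Next I would simply let $n \to \infty$: since $n+1 \to \infty$ and $|\sigma_{S_{2n}(G;H_1,H_2)}| \geq n+1$, we conclude $\lim_{n\to\infty}|\sigma_{S_{2n}(G;H_1,H_2)}| = \infty$. That is the entire argument; there is essentially no obstacle, since all the substantive work has already been done in establishing Theorem \ref{S_2n SEM} (and, behind it, Theorem \ref{sem lowerbound}, Lemma \ref{S2n isomorphism}, and Lemma \ref{k1nsem}).

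The only point worth a sentence of care is that the inequality $|\sigma_{S_{2n}(G;H_1,H_2)}| \geq (n+1)|\sigma_G|$ requires the \emph{same} fixed decomposition $G \cong H_1 \oplus H_2$ and the \emph{same} fixed graph $G$ for all $n$ — which is exactly what the hypothesis of the corollary provides, so the family $\{S_{2n}(G;H_1,H_2)\}_{n\geq 1}$ is well defined and Theorem \ref{S_2n SEM} applies uniformly in $n$. Thus no additional hypotheses beyond those already assumed are needed, and the corollary follows directly.
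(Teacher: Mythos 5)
Your argument is correct and is exactly the intended one: the paper states this as an immediate consequence of Theorem \ref{S_2n SEM}, and your observation that $|\sigma_G|\ge 1$ forces $|\sigma_{S_{2n}(G;H_1,H_2)}|\ge n+1\to\infty$ is all that is needed.
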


\begin{corollary}
Let $G$ be a bipartite edge-magic simple graph with stable sets $X$ and $Y$. If $G$ admits a decomposition $G\cong H_1\oplus H_2$, then $\lim_{n\to\infty}|\tau_{S_{2n}(G;H_1,H_2)}|=\infty$.
\end{corollary}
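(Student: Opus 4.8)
The plan is to deduce this corollary directly from Theorem~\ref{S_2n EM} together with the fact that there exist bipartite edge-magic graphs admitting a decomposition into two parts. First I would observe that the hypotheses of the corollary are exactly those of Theorem~\ref{S_2n EM}, so for each positive integer $n$ we obtain $|\tau_{S_{2n}(G;H_1,H_2)}|\geq (n+1)|\tau_G|+2$. Since $G$ is edge-magic, it has at least one edge-magic labeling, hence $|\tau_G|\geq 1$ (note that $\tau_G$ records realizable valences, and every valence lies in the finite interval $J_G$, so this set is well defined and nonempty).

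Next I would let $n\to\infty$ in the inequality $|\tau_{S_{2n}(G;H_1,H_2)}|\geq (n+1)|\tau_G|+2$. Because $|\tau_G|$ is a fixed positive integer not depending on $n$, the right-hand side $(n+1)|\tau_G|+2$ tends to infinity as $n\to\infty$. Therefore $\lim_{n\to\infty}|\tau_{S_{2n}(G;H_1,H_2)}|=\infty$, which is the claimed statement. The same one-line argument yields the companion corollary for $\sigma$ from Theorem~\ref{S_2n SEM}, using $|\sigma_G|\geq 1$ for a super edge-magic $G$.

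There is essentially no obstacle here: the corollary is a formal consequence of the already-proved lower bound, and the only thing to check is that $|\tau_G|$ is a genuinely positive, $n$-independent quantity, which follows immediately from $G$ being edge-magic. If one wanted to be slightly more careful, one could also remark that the graphs $S_{2n}(G;H_1,H_2)$ really do exist and are edge-magic for every $n$ by Theorem~\ref{coro: SEM new_multipartite graph}, so the symbol $\tau_{S_{2n}(G;H_1,H_2)}$ is meaningful; but this is immediate from Lemma~\ref{S2n isomorphism} and Theorem~\ref{spk}. Hence the proof is short and the writeup amounts to citing Theorem~\ref{S_2n EM} and passing to the limit.
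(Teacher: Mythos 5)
Your proposal is correct and follows exactly the route the paper intends: the corollary is stated as an immediate consequence of Theorem~\ref{S_2n EM}, and since $G$ edge-magic gives $|\tau_G|\ge 1$, the lower bound $(n+1)|\tau_G|+2$ forces the limit to be infinite. Nothing further is needed.
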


At this point, consider any graph $G^*$ whose vertex set admits a partition of the form $V(G^*)=X\cup Y\cup_{k=1}^n X_k \cup_{k=1}^n Y_k$ and that decomposes as a union of three bipartite graphs $G^*\cong G\oplus H_1\oplus H_2$, where $G^*[X\cup Y]\cong G$, $G^*[X\cup Y_k]\cong H_1$ and $G^*[X_k\cup Y]\cong H_2$ for all $k\in [1,n]$. By Theorem \ref{theo: SEM new_bipartite graph}, we have the following remarks.
\begin{remark}
If $G$ is a (super) edge-magic graph and $G^*$ is not, then $H_1$ and $H_2$ do not decompose $G$.
\end{remark}
\begin{remark}
If $|\sigma_{S_{2n}(G;H_1,H_2)}| < (n+1)|\sigma_G|$ provided that $G$ is a bipartite super edge-magic graph, then $G\not \cong H_1\oplus H_2 $.
\end{remark}
\begin{remark}
If $|\tau_{S_{2n}(G;H_1,H_2)}| < (n+1)|\tau_G|+2$ provided that $G$ is a bipartite edge-magic graph, then $G\not \cong H_1\oplus H_2 $.
\end{remark}
We will bring this section to its end, by mentioning that, although  some labelings involving differences as for instance, graceful labelings and $\alpha$-valuations have a strong relationship with graph decompositions, the results mentioned in this section are the only ones known relating the subject of decompositions with addition type labelings. This is why we consider these results interesting.

\section{Conclusions}
The goal of this paper is to show a new application of labeled super edge-magic (di)graphs to graph decompositions. The relation among labelings and decompositions of graphs is not new. In fact, one of the first motivations in order to study graph labelings was the relationship existing between graceful labelings of trees and decompositions of complete graphs into isomorphic trees. What we believe that it is new and surprising about the relation established in this paper is that, as far as we know, there are no relations between labelings involving sums and graph decompositions. In fact, we believe that this is the first relation found in this direction and we believe that to explore this relationship is a very interesting line for future research.

\vspace{0.5cm}

\noindent{\bf Acknowledgements} The research conducted in this document by the first author has been supported symbolically by the Catalan Research Council under grant 2014SGR1147.

\end{document}